%
%
%

\documentclass[graybox]{svmult}


\usepackage{type1cm}        
%
\usepackage{makeidx}         
\usepackage{graphicx}        
\usepackage{multicol}        
\usepackage[bottom]{footmisc}

\usepackage{newtxtext}       %
\usepackage{newtxmath}       


\usepackage[english]{babel}
\usepackage[babel=true]{microtype}
\usepackage{amsmath}

\usepackage{algorithm}
\usepackage{algpseudocode}

\usepackage{siunitx}
\sisetup{%
   tight-spacing, group-separator={\;}, group-digits=integer,
   group-minimum-digits=4,
}

\usepackage{hyperref}
\usepackage{cleveref}
\crefname{equation}{}{}
\Crefname{equation}{Equation}{Equations}
\crefname{figure}{Fig.}{Figs.}
\Crefname{figure}{Figure}{Figures}
\crefname{table}{Table}{Tables}
\crefname{algorithm}{Algorithm}{Algorithms}
\crefname{section}{Sect.}{Sects.}
\Crefname{section}{Section}{Sections}

\crefname{theorem}{Theorem}{Theorems}
\Crefname{theorem}{Theorem}{Theorems}
\crefname{corollary}{Corollary}{Corollaries}
\Crefname{corollary}{Corollary}{Corollaries}
\crefname{lemma}{Lemma}{Lemmas}
\Crefname{lemma}{Lemma}{Lemmas}
\crefname{definition}{Definition}{Definitions}
\Crefname{definition}{Definition}{Definitions}
\crefname{remark}{Remark}{Remarks}
\Crefname{remark}{Remark}{Remarks}

\def\relint{\operatorname{relint}}
\def\conv{\operatorname{conv}}\newcommand{\bx}{\mathbf{x}}
\newcommand{\mt}{{\mathcal T}}

\makeindex             


\begin{document}

\title*{Manifolds of triangulations, braid groups of manifolds, and the groups $\Gamma_{n}^{k}$}
\author{Denis A. Fedoseev, Vassily O. Manturov and Igor M. Nikonov}
\institute{%
   Denis A. Fedoseev \at{} Moscow State University, Moscow Center for Fundamental and Applied Mathematics, \email{denfedex@yandex.ru}
   \and{}
   Vassily O. Manturov \at{} Moscow Institute of Physics and Technology, Russia; North-Eastern University, China; Kazan Federal University, Russia, \email{vomanturov@yandex.ru}
   \and{}
   Igor M. Nikonov \at{} Moscow State University, \email{nim@mail.ru}
}
%
%
\maketitle

\abstract*{The spaces of triangulations of a given manifold have been widely studied. The celebrated theorem of Pachner~\cite{Pachner} says that any two triangulations of a given manifold can be connected by a sequence of bistellar moves, or Pachner moves, see also~\cite{GKZ,Nabutovsky}. In the present paper we consider groups which naturally appear when considering the set of triangulations with fixed number of simplices of maximal dimension. There are three ways of introducing this groups: the geometrical one, which depends on the metric, the topological one, and the combinatorial one. The second one can be thought of as a ``braid group'' of the manifold and, by definition, is an invariant of the topological type of manifold; in a similar way, one can construct the smooth version. We construct a series of groups $\Gamma_{n}^{k}$ corresponding to Pachner moves of $(k-2)$-dimensional manifolds and construct a canonical map from the braid group of any $k$-dimensional manifold to $\Gamma_{n}^{k}$ thus getting topological/smooth invariants of these manifolds.}

\abstract{The spaces of triangulations of a given manifold have been widely studied. The celebrated theorem of Pachner~\cite{Pachner} says that any two triangulations of a given manifold can be connected by a sequence of bistellar moves, or Pachner moves, see also~\cite{GKZ,Nabutovsky}. In the present paper we consider groups which naturally appear when considering the set of triangulations with fixed number of simplices of maximal dimension. There are three ways of introducing this groups: the geometrical one, which depends on the metric, the topological one, and the combinatorial one. The second one can be thought of as a ``braid group'' of the manifold and, by definition, is an invariant of the topological type of manifold; in a similar way, one can construct the smooth version. We construct a series of groups $\Gamma_{n}^{k}$ corresponding to Pachner moves of $(k-2)$-dimensional manifolds and construct a canonical map from the braid group of any $k$-dimensional manifold to $\Gamma_{n}^{k}$ thus getting topological/smooth invariants of these manifolds.}

\section*{Introduction}

Returning back to 1954, J.W.~Milnor in his first paper on link groups~\cite{Milnor} formulated the idea that manifolds which share lots of well known invariants (homotopy type, etc.) may have different {\em link groups}.

In 2015 the second named author~\cite{MN} defined a series of groups $G_{n}^{k}$ for natural numbers $n>k$ and formulated the following principle:

\begin{important}{The main principle}
If dynamical systems describing the motion of $n$ particles possess a nice codimension one property depending exactly on $k$ particles, then these dynamical systems admit a topological invariant valued in $G_{n}^{k}$.
\end{important}

We shall define braids on a manifold as loops in some configuration space related to the manifold.
Following $G^k_n$-ideology, we mark singular configurations, while moving along a loop in the configuration space, in order to get a word  --- an element of a ``$G^k_n$-like'' group $\Gamma_n^k$. Singular configurations will arise here from the moments of transformation of triangulations spanned by the configuration points; and in this case  the good property of codimension one is {\em ``$k$ points of the configuration lie on a sphere of dimension $k-3$ and there are no points inside the sphere''}.

In the present paper, we shall consider an arbitrary closed $d$-manifold $M^{d}$ and for $n$ large enough we shall construct the braid group $B_{n}(M^{d})=\pi_1(C_n(M^d))$ (see Definitions~\cref{def:geometric_n_braid}, \cref{def:topological_n_braid}, \cref{def:combinatorial_n_braid}), where $C_n(M^d)$ is the space of generic (in some sense) configurations of points in $M^d$ which are dense enough to span a triangulation of $M$. This braid group will have a natural map to the group $\Gamma_{n}^{d+2}$. 
We shall consider point configurations for which the Delaunay triangulation exists, see~\cite{BDGM}. \\

The idea of this work comes from the second named author, and the third named author defined the constructions of $\Gamma_n^k$, $k\ge 5$, given in \cref{sect:Gamma_nk_construction}. \\

\begin{acknowledgement}
The authors are very grateful to S.~Kim for extremely useful discussions and comments.

The first named author was supported by the Russian Foundation for Basic Research (grant No.~19-01-00775-a, grant No.~20-51-53022). The second named author was supported by the Russian Foundation for Basic Research (grant No.~20-51-53022, grant No.~19-51-51004). The third named author was supported by the Russian Foundation for Basic Research (grant No.~18-01-00398-a, grant No.~19-51-51004).
The work of V.O.M. was also funded by the development program of the Regional Scientific and
Educational Mathematical Center of the Volga Federal District, agreement No.~075-02-2020.
\end{acknowledgement}

\section{The Manifold of Triangulations}
\label{sec:manifold_of_triangulations}
In the present section we define three types of manifold of triangulations: a geometrical one, a topological one, and a combinatorial one. We begin with geometrical approach, considering a manifold with a Riemannian metric.

\subsection{Geometrical manifold of triangulations}
\label{sec:geometrical_manifold}

Fix a smooth manifold $M$ of dimension $d$ with a Riemannian metric $g$ on it. Let $n\gg d$ be a large natural number. Consider the set of all Delaunay triangulations of $M$ with $n$ vertices $x_{1},\dots, x_{n}$ (if such triangulations exist). Such Delaunay triangulations are indexed by sets of vertices
, hence, the set of such triangulation forms a subset of the configuration space, which we denote by $C(M,n)$. The above subset will be an open (not necessarily connected) manifold of dimension $dn$: if some $n$ pairwise distinct points $(x_{1},\dots, x_{n})$ form a Delaunay triangulation then the same is true for any collection of points $(x'_{1},\dots, x'_{n})$ in the neighbourhood of $(x_{1},\dots, x_{n})$. We call a set of points $x_{1}, \dots, x_{n}$ {\em admissible} if such a Delaunay triangulation exists.

Hence, we get a non-compact (open) manifold $M_{g}^{dn}$. 
We call two sets of points $(x_{1},\dots, x_{n})$ and $(x'_{1},\dots, x'_{n})$ {\em adjacent} if there is a path $(x_{1,t},\dots, x_{n,t})$ for $t\in [0,1]$ such that $(x_{1,0},\dots,x_{n,0})=(x_{1},\cdots, x_{n})$ and $(x_{1,1},\dots, x_{n,1})=(x'_{1},\dots, x'_{n})$ and there exists exactly one $t_{0}\in [0,1]$ such that when passing through $t=t_{0}$ the Delaunay triangulation $(x_{1,t},\dots, x_{n,t})$ undergoes a flip (also called {\em Pachner moves} or {\em bistellar moves}). 

The flip (Pachner move) corresponds to a position when some $d+2$ points $(x_{{i_1},t_{0}},\dots, x_{{i_{d+2}},t_{0}})$ belong to the same $(d-1)$-sphere $S^{d-1}$ such that no other point $x_{j}$ lies inside the ball $B$ bounded by this sphere.



Every manifold of triangulations described above has a natural stratification. Namely, every point of $M_{g}^{dn}$ is given by a collection $(x_{1},\dots, x_{n})$. Such a collection is  {\em generic} if there is no sphere $S^{d-1}$ such that exactly $d+2$ points among $x_{k}$ belong to this sphere without any points inside the sphere.

We say that a point $(x_{1},\dots, x_{n})$ is {\em of codimension $1$}, if there exists exactly one sphere with exactly $d+2$ points on it and no points inside it; analogously, {\em codimension $2$ strata} correspond to either one sphere with $d+3$ points or two spheres containing $d+2$ points each.

In codimension $2$ this corresponds to either one point of valency five in Vorono\"{\i} tiling or two points of valency four in Vorono\"{\i} tiling.

Hence, we have constructed a stratified (open) manifold $M_{g}^{dn}$. We call it the {\em geometrical manifold of triangulations}\index{triangulation manifold!metrical}.

Note that the manifold $M_{g}^{dn}$ may be not connected, i.e. there can exist non-equivalent triangulations. On the other hand, if one considers the spines of some manifold, they all can be transformed into each other by Matveev--Piergallini moves~\cite{Matveev}.

Denote the connected components of $M_{g}^{dn}$ by $(M_{g}^{dn})_{1},\dots, (M_{g}^{dn})_{p}$.

\begin{definition}
The {\em geometrical $n$-strand braid groups} of the manifold $M_{g}$ are the fundamental groups
$$B_g(M_{g},n)_{j} = \pi_{1}((M_{g}^{dn})_{j}), \quad j=1,\dots, p.$$
\label{def:geometric_n_braid}
\end{definition}

\subsection{Topological manifold of triangulations}
\label{sec:topological_manifold}

The definition of geometrical manifold of triangulations heavily depends on the metric of the manifold $M$. For example, if we take $M$ to be the torus glued from the square $1\times 10$, the combinatorial structure of the manifold of triangulations will differ from the combinatorial structure for the case of the manifold of triangulations for the case of the torus glued from the square $1\times 1$. Its analogue which is independent on the metric is the {\em topological manifold of triangulations}. We shall construct the $2$-frame of this manifold. The idea is to catch all simplicial decompositions which may eventually happen for the manifold of triangulations for whatever metrics, and glue them together to get an open (non-compact) manifold.

Consider a topological manifold $M^{d}$. We consider all Riemannian metrics $g_{\alpha}$ on this manifold. They give manifolds $M_{g_{\alpha}}^{dn}$ as described in \cref{sec:geometrical_manifold}, which are naturally stratified. By a {\em generalised cell} of such a stratification we mean a connected component of the set of generic points of $M_{g_{\alpha}}^{dn}$.

We say that two generalised cells $C_{1}$ and $C_{2}$ are {\em adjacent} it there exist two points, say, $x=(x_{1},\dots, x_{n})$ in $C_{1}$ and $x'=(x'_{1},\dots, x'_{n})$ in $C_{2}$, and a path $x_{t}=(x_{1}(t),\dots, x_{n}(t))$, such that $x_{i}(0)=x_{i}$ and $x_{i}(1)=x'(i)$ such that all points on this path are generic except for exactly one point, say, corresponding to $t=t_{0}$, which belongs to the stratum of codimension $1$.

We say that two generic strata of $M_{g_{\alpha}}^{dn}, M_{g_{\beta}}^{dn}$ are {\em equivalent} if there is a homeomorphism of $M_{g_{\alpha}}^{dn}\to M_{g_{\beta}}^{dn}$ taking one stratum to the other.

A generalised {\em $0$-cell} of the manifold of triangulations is an equivalence class of generic strata.

Analogously, we define generalised {\em $1$-cells} of the manifold of triangulations as equivalence
classes of pairs of adjacent vertices for different metrics $M_{g_{\alpha}}^{dn}$.

In a similar manner, we define generalised {\em $2$-cells} as equivalence classes of discs for
metrics $M_{g_{\alpha}}^{dn}$ such that:
\begin{enumerate}
\item vertices of the disc are points in $0$-strata;
\item edges of the disc connect vertices from adjacent $0$-strata;
each edge intersects codimension $1$ set exactly in one point;
\item the cycle is spanned by a disc which intersects codimension $2$ set exactly
at one point;
\item equivalence is defined by homeomorphism taking disc to disc,
edge to an equivalent edge and vertex to an equivalent vertex and
respects the stratification.
\end{enumerate}

Thus, we get the $2$-frame of the manifold $M^{dn}_{top}$. This manifold might be disconnected.

\begin{definition}
The {\em topological $n$-strand braid groups} of the manifold $M$ are the fundamental groups
$$B_t(M,n)_{j} = \pi_{1}((M^{dn}_{top})_{j}), \quad j=1,\dots, q.$$
\label{def:topological_n_braid}
\end{definition}



%
%

\begin{figure}
\centering\includegraphics[width=200pt]{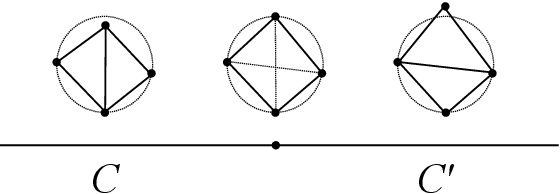}
\caption{Codimension 1 corresponds to a flip}\label{fig:delaunay_flip}
\end{figure}



\begin{figure}[ht]
\sidecaption{}
\includegraphics[width=180pt]{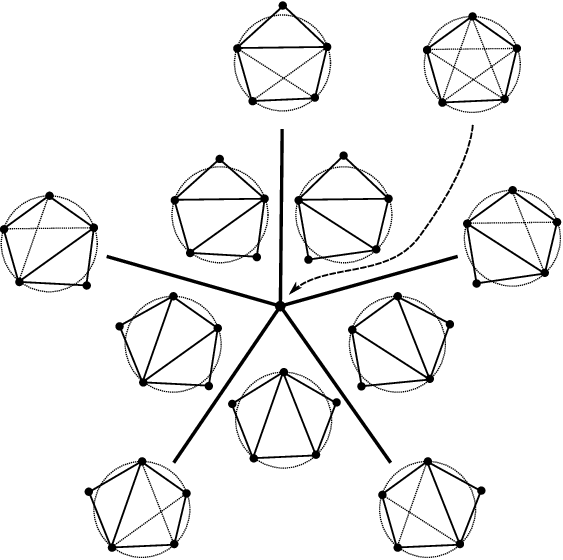}
\caption{Codimension 2 corresponds to a pentagon}\label{fig:delaunay_pentagon}
\end{figure}





\subsection{Combinatorial manifold of triangulations}

In \cref{sec:geometrical_manifold} and \cref{sec:topological_manifold} we constructed manifolds of triangulations basing on the notion of {\em Delaunay triangulation}. We did not actually discuss the {\em existence} of such triangulation for a given manifold and a given set of points, requesting only that the number of points is sufficiently large and points are sufficiently dense. It turns out that this condition is not always sufficient, as shows in \cite{BDGM}. Even for a large number of points and a Riemannian manifold with metric arbitrary close to Euclidian one, there may not exist a Delaunay triangulation.

One may impose additional restrictions on the vertex set or on the manifold itself to overcome this difficulty, but to work with the most general situation we shall consider the third notion of manifolds of triangulations: the {\em combinatorial manifold of triangulations} which we denote by $M^{dn}_{comb}$. We construct the 2-frame of the manifold $M^{dn}_{comb}$ which is needed to get the fundamental group.

First, we fix $n$ points on the manifold $M^d$ and consider triangulations of the manifold with vertices in those points. Now we do not restrict ourselves to Delaunay triangulations, but consider all triangulations of the manifold. Moreover, we work with {\em equivalence classes} of triangulations modulo the following relation: two triangulations $T_1, T_2$ with (fixed) vertices $v_1,\dots, v_n$ are said to be {\em equivalent} if and only if for each pair $(i,j)\in\bar{n}$ the vertices $v_i, v_j$ are connected by an edge of the triangulation $T_1$ if and only if they are connected by an edge of the triangulation $T_2$. From now on saying ``triangulation'' we mean such equivalence class of triangulations. Those triangulations are the vertices (0-cells) of the frame of the manifold $M^{dn}_{comb}$. 

Two triangulations are connected by an edge (a 1-cell of the frame) if and only if they differ by a {\em flip} --- a Pachner move.

Finally, the 2-cells of the frame are chosen to correspond to the relations of the groups  $\Gamma_n^k$ (see \cref{sect:Gamma_nk_construction}). There are two types of such relations. One relation corresponds to the far commutativity, i.e., for any two {\em independent} flips $\alpha,\beta$ (flips related to different edges) we define a quadrilateral consisting of subsequent flips corresponding to $\alpha,\beta,\alpha^{-1},\beta^{-1}$. The other relations correspond to all possible simplicial polytopes with $d+3$ vertices inscribed in the unit sphere. With each polytope of such sort, we associate a relation of length $d+3$ as shown in \cref{sect:Gamma_nk_construction}.

\begin{definition}
The {\em combinatorial $n$-strand braid groups} of the manifold $M$ are the fundamental groups
$$B_c(M,n)_{j} = \pi_{1}((M^{dn}_{comb})_{j}), \quad j=1,\dots, q.$$
\label{def:combinatorial_n_braid}
\end{definition}

\section{The groups $\Gamma^k_n$}\label{sect:Gamma_nk_construction}

In the present section we consider the case of the manifold $M^d=\mathbb{R}^d$ being a vector space. The case of Euclidian space gives groups which are important for the general case. In the present paper we deal mostly with the case of $d\ge 3$. The very interesting initial case $d=2$ is studied in~\cite{ManturovKim}. We start with the case $d=3$.

\subsection{The case $d=3$}
Consider a configuration of $n$ points in general position in $\mathbb{R}^3$. We can think of these points as lying in a fixed tetrahedron $ABCD$. The points induce a unique Delaunay triangulation of the tetrahedron: four points form a simplex of the triangulation if and only if there is no other points inside the sphere circumscribed over these points. The triangulation transforms when the points move in the space.

In order to avoid degenerate Delaunay triangulations we exclude configurations where four points lie on one circle (intersection of a plane and a sphere).

Transformations of the combinatorial structure of the Delaunay triangulation correspond to configurations of codimension $1$ when five points lie on a sphere which does not contain any points inside. At this moment two simplices of the triangulation are replaced with three simplices as shown in \cref{fig:pachner_move} (or vice versa). This transformation is called a {\em 2-3 Pachner move}\index{Pachner move}. 

To trace the evolution of triangulations that corresponds to a dynamics of the points we attribute a generator to each Pachner move. For the move that replace the simplices $iklm, jklm$ with the simplices $ijkl, ijkm, ijlm$ in \cref{fig:pachner_move} we use the generator $a_{ij,klm}$. Note that 1) we can split the indices into two subsets according to the combinatorics of the transformation; 2) the generator $a_{ij,klm}$ is not expected to be involutive because it changes the number of simplices of the triangulation.


\begin{figure}
\centering\includegraphics[width=300pt]{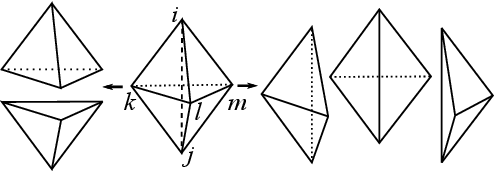}
\caption{A Pachner move}
\label{fig:pachner_move}
\end{figure}

The relations on generators $a_{ij,klm}$ correspond to configurations of codimension $2$ which occurs when either 1) six points lie on the same sphere with empty interior, or 2) there are two spheres with five points on each of them, or 3) five points on one sphere compose a codimension $1$ configuration.

The last case means that the convex hull of the five points has a quadrilateral face (\cref{fig:delauney_pentahedron}). The vertices of this face lie on one circle so we exclude this configuration.

\begin{figure}[t!]
\sidecaption{}
\centering\includegraphics[width=120pt]{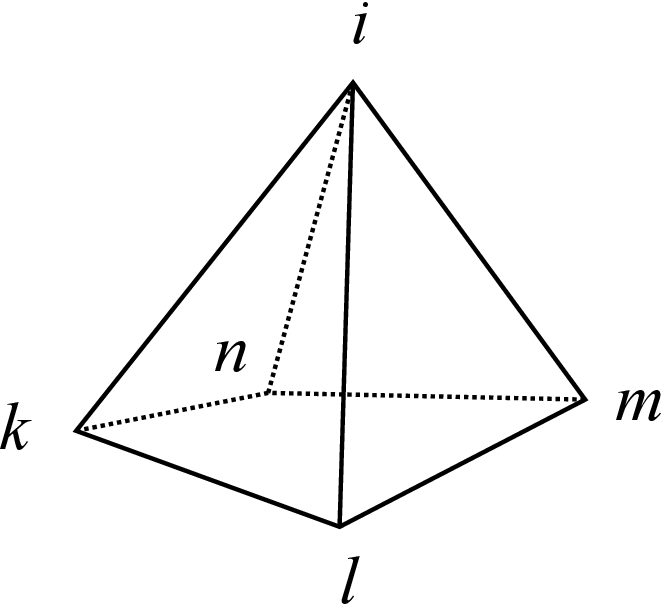}
\caption{A quadrilateral pyramid}
\label{fig:delauney_pentahedron}
\end{figure}

If there are two different spheres with five points on each of them, then there is no simplex inscribed into the both spheres (otherwise its vertices would belong to one plane which contains the intersection of the spheres). Hence, the simplices inscribed into the first spheres and the simplices inscribed into the second one have no common internal points. For each sphere we can suppose that the faces of the convex hull of the inscribed simplices has only triangular faces; in the other case, that would be a configuration of codimension greater than two. So the convex hulls can have at most one common face, so we can transform them independently. This gives us a commutation relation
\begin{equation}a_{ij,klm} a_{i'j',k'l'm'}=a_{i'j',k'l'm'}a_{ij,klm},\end{equation}
where
\begin{equation}|\{i,j,k,l,m\}\cap\{i',j',k',l',m'\}|<4,\end{equation}
\begin{equation}|\{i,j\}\cap\{i',j',k',l',m'\}|<2,$$ and $$|\{i',j'\}\cap\{i,j,k,l,m\}|<2.\end{equation}

Consider now the case of six points on one sphere. The convex hull of these points is a convex polyhedron. The polyhedron must have only triangular faces; otherwise, there is an additional linear condition (four points lie on one plane) which raises the codimension beyond $2$. There are two such polyhedra, see \cref{fig:delauney_octahedra}.

\begin{figure}{ht}
\sidecaption{}
\centering\includegraphics[width=100pt]{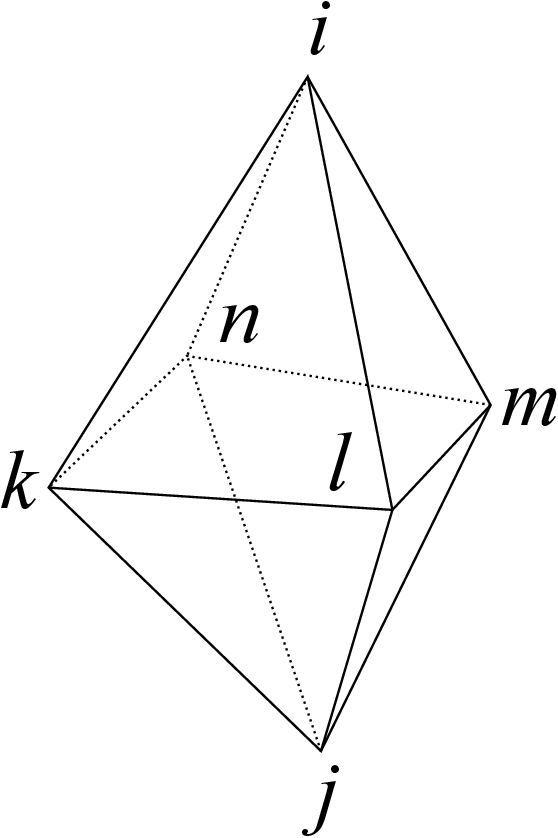}\quad\centering\includegraphics[width=70pt]{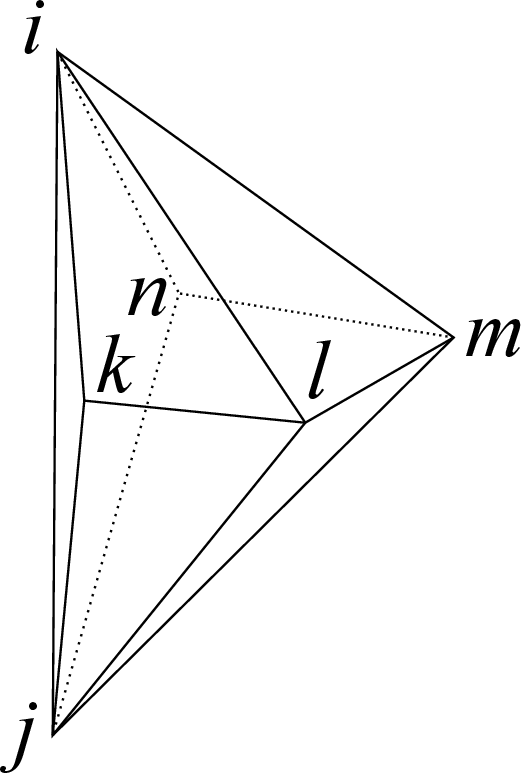}
\caption{Convex polyhedra with triangular faces and $6$ vertices}
\label{fig:delauney_octahedra}
\end{figure}

For the octahedron on the left we specify the geometrical configuration assuming that the orthogonal projection along the direction $ij$ maps the points $i$ and $j$ near the projection of the edge $kl$ and the line $ln$ is higher than $km$ if you look from $i$ to $j$ (\cref{fig:delauney_octahedron1_projection}). Note that we live in $\mathbb{R}^3$ which we may consider as oriented. In this case we have six triangulations:
\begin{enumerate}
\item $ijkl, ijkn, ijlm, ijmn$;
\item $iklm, ikmn, jklm, jkmn$;
\item $ikln, ilmn, jkln, jlmn$;
\item $ijkl, ijkm, ijlm, ikmn, jkmn$;
\item $ijkl, ijkn, ijln, ilmn, jlmn$;
\item $ikln, ilmn, jklm, jkmn, klmn$.
\end{enumerate}

\begin{figure}
\sidecaption{}
\centering\includegraphics[width=120pt]{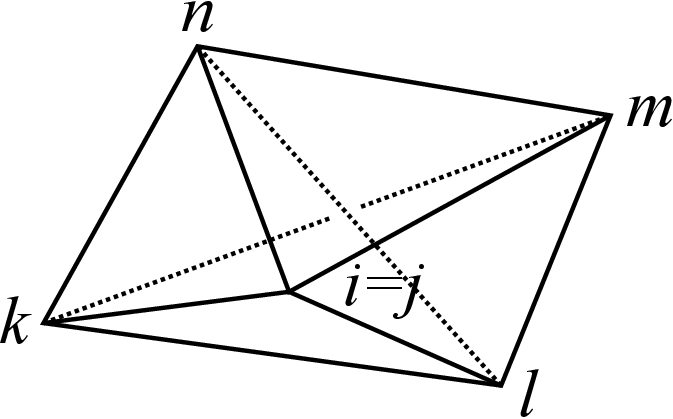}
\caption{An orthogonal projection of the octahedron}
\label{fig:delauney_octahedron1_projection}
\end{figure}

The first three have four simplices each, the last three have five simplices each. The Pachner moves between the triangulations are shown in \cref{fig:delauney_octahedron1_hasse}. Thus, we have the relation
\begin{equation}
a^{}_{km,ijn}a^{-1}_{ij,klm}a^{}_{ln,ikm}a^{-1}_{km,jln}a^{}_{ij,kln}a^{-1}_{ln,ijm}=1.
\end{equation}

\begin{figure}
\sidecaption{}
\centering\includegraphics[width=120pt]{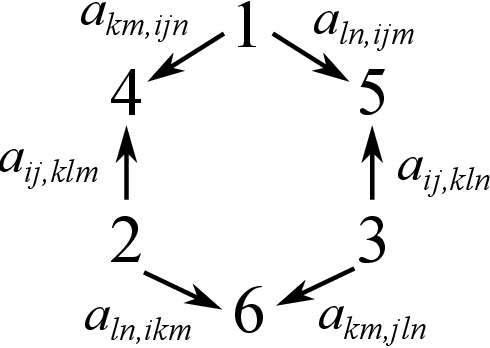}
\caption{Triangulation graph for the octahedron}
\label{fig:delauney_octahedron1_hasse}
\end{figure}

In the case of the shifted octahedron (\cref{fig:delauney_octahedra}, right) we assume the line $ln$ lies higher than the line $km$ when one looks from the vertex $i$ to the vertex $j$ (\cref{fig:delauney_octahedron2_projection}). Then we have six triangulations:
\begin{enumerate}
\item $ijkl, ijkm, ijmn$;
\item $ijkl, ijln, ilmn, jlmn$;
\item $ijkm, ijmn, iklm, jklm$;
\item $ijkn, ikln, ilmn, jkln, jlmn$;
\item $ijkn, iklm, ikmn, jklm, jkmn$;
\item $ijkn, ikln, ilmn, jklm, jkmn, klmn$.
\end{enumerate}

\begin{figure}
\sidecaption{}
\centering\includegraphics[width=100pt]{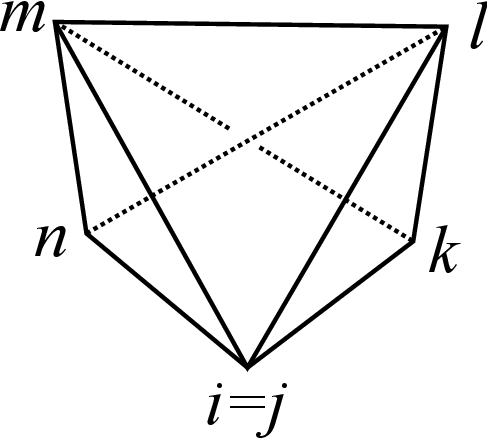}
\caption{An orthogonal projection of the shifted octahedron}
\label{fig:delauney_octahedron2_projection}
\end{figure}

The Pachner moves between the triangulations are shown in \cref{fig:delauney_octahedron2_hasse}. This gives us the relation
\begin{equation}
a_{ln,ijm}a_{kn,ijl}a_{km,jln}=a_{km,ijl}a_{kn,ijm}a_{ln,ikm}.
\end{equation}

\begin{figure}
\sidecaption{}
\centering\includegraphics[width=120pt]{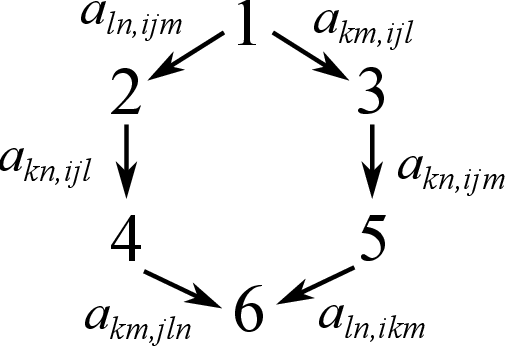}
\caption{Triangulation graph for the shifted octahedron}
\label{fig:delauney_octahedron2_hasse}
\end{figure}


\begin{definition}\label{def:Gamma_n5}
The group $\Gamma_n^5$ is the group with generators $$\{ a_{ij,klm}\,|\, \{i,j,k,l,m\}\in \bar n, |\{i,j,k,l,m\}|=5\}$$ and relations
\begin{enumerate}
\item $a_{ij,klm}=a_{ji,klm}=a_{ij,lkm}=a_{ij,kml}$,
\item $a_{ij,klm} a_{i'j',k'l'm'}=a_{i'j',k'l'm'}a_{ij,klm}$, for $$|\{i,j\}\cap\{i',j',k',l',m'\}|<2,\quad |\{k,l,m\}\cap\{i',j',k',l',m'\}|<3$$ and
$$|\{i',j'\}\cap\{i,j,k,l,m\}|<2,\quad |\{k',l',m'\}\cap\{i,j,k,l,m\}|<3$$
\item $a^{}_{km,ijn}a^{-1}_{ij,klm}a^{}_{ln,ikm}a^{-1}_{km,jln}a^{}_{ij,kln}a^{-1}_{ln,ijm}=1$ for distinct $i,j,k,l,m,n$,
\item $a_{ln,ijm}a_{kn,ijl}a_{km,jln}=a_{km,ijl}a_{kn,ijm}a_{ln,ikm}$, for distinct $i,j,k,l,m,n$.
\end{enumerate}
\end{definition}

The group $\Gamma_n^5$ can be used to construct invariants of braids and dynamic systems. 
Consider the configuration space $\tilde C_n(\mathbb{R}^3)$ which consists of nonplanar $n$-tuples $(x_1,x_2,\dots,x_n)$ of points in $\mathbb{R}^3$ such that for all distinct $i,j,k,l$ the points $x_i,x_j,x_k,x_l$ do not lie on the same circle.

We construct a homomorphism from $\pi_1(\tilde C_n(\mathbb{R}^3))$ to $\Gamma_{n}^5$. Let
$$\alpha=(x_1(t),\dots, x_n(t)),\quad t\in [0,1],$$
be a loop in $\tilde C_n(\mathbb{R}^3)$. For any $t$ the set
$\mathbf{x}(t)=(x_1(t), \dots, x_n(t))$
determines a Delaunay triangulation $T(t)$ of the polytope $\conv \mathbf{x}(t)$. If $\alpha$ is in general position there will be a finite number of moments $0<t_1<t_2<\dots<t_N<1$ when the combinatorial structure of $T(t)$ changes, and for each $p$ the transformation of the triangulation at the moment $t_p$ will be the Pachner move on simplices with vertices $i_p,j_p,k_p,l_p,m_p$. We assign to this move the generator $a_{i_pj_p,k_pl_pm_p}$ or $a^{-1}_{i_pj_p,k_pl_pm_p}$ and denote
$\phi(\alpha)=\prod_{p=1}^N a_{i_pj_p,k_pl_pm_p}\in\Gamma_{n}^5.$

\begin{theorem}\label{thm:Gamman5_invariant}
The homomorphism $\phi\colon \pi_1(\tilde C_n(\mathbb{R}^3))\to\Gamma^5_{n}$ is well defined.
\end{theorem}

\begin{proof}
We need to show that the element $\phi(\alpha)$ does not depend on the choice of the representative $\alpha$ in a given homotopy class. Given a homotopy in general position $\alpha(\tau), \tau\in[0,1],$  of loops in $\tilde C_n(\mathbb{R}^3)$, the transformations of the words $\phi(\alpha(\tau))$ correspond to point configurations of codimension $2$, considered above the definition of $\Gamma_n^5$, and thus are counted by the relations in the group $\Gamma^5_{n}$. Therefore, the element $\phi(\alpha(\tau))$ of the group $\Gamma^5_{n}$ remains the same when $\tau$ changes.
\end{proof}

\subsection{General case}
\label{sec:gamma_general}

The braids in higher dimensional Euclidean spaces were defined by the second named author in~\cite{HigherGnk}.
For our needs we modify that definition slightly.

Consider the configuration space $\tilde C_n(\mathbb{R}^{k-2})$, $4\le k\le n$, consisting of $n$-point configurations $\mathbf{x}=(x_1, x_2,\dots, x_n)$ in $\mathbb{R}^{k-2}$, such that $\dim\conv\mathbf{x}=k-2$ and there are no $k-1$ points which lie on one $(k-4)$-dimensional sphere (the intersection of a $(k-3)$-sphere and a hyperplane in in $\mathbb{R}^{k-2}$).

A configuration $\mathbf{x}=(x_1, x_2,\dots, x_n)\in\tilde C_n(\mathbb{R}^{k-2})$ determines a Delaunay triangulation of $\conv\mathbf{x}$ which is unique when $\mathbf x$ is in general position. If the vertices $x_{i_1},\dots,x_{i_{k-1}}$ span a simplex of the Delaunay triangulation then the interior of the circumscribed sphere over these points does not contain other points of the configuration. The inverse statement is true when $\mathbf x$ is generic. The condition that no $k-1$ points lie on one $(k-4)$-dimensional sphere ensures that there are no degenerate simplices in the Delaunay triangulation.

Let $\alpha=(x_1(t),\dots, x_n(t)),\quad t\in [0,1],$
be a loop in $\tilde C_n(\mathbb{R}^{k-2})$. For any $t$ the configuration $\mathbf{x}(t)=(x_1(t), \dots, x_n(t))$
determines a Delaunay triangulation $T(t)$ of the polytope $\conv \mathbf{x}(t)$. If $\alpha$ is generic then there will be a finite number of moments $0<t_1<t_2<\dots<t_L<1$ when the combinatorial structure of $T(t)$ changes. We shall call such configurations {\em singular}.


For each singular configuration $\mathbf{x}(t_i)$, either one simplex degenerates and disappears on the boundary $\partial\conv \mathbf{x}(t)$ or the Delaunay triangulation is not unique. That means that there is a sphere in $\mathbb{R}^{k-2}$ which contains $k$ points of $\mathbf x$ on it and no points of $\mathbf x$ inside. Assuming $\mathbf x$ is generic, the span of these $k$ points is a simplicial polytope. Below we shall count only the latter type of singular configurations.

The simplicial polytopes in $\mathbb{R}^{k-2}$ with $k$ vertices are described in~\cite{Gruenbaum}.
Each of them is the join $\Delta_P*\Delta_Q$ of simplices $\Delta_P=\conv(x_{p_1},\dots,x_{p_l})$ of dimension $l-1\ge 1$ and $\Delta_Q=\conv(x_{q_1},\dots,x_{q_{k-l}})$ of dimension $k-l-1\ge 1$ such that the intersection $\relint(x_{p_1},\dots,x_{p_l})\cap\relint(x_{q_1},\dots,x_{q_{k-l}})$ consists of one point.

We recall that the {\em join} of two sets $X,Y\subset\mathbb{R}^m$ is defined as
$X*Y=\{ \lambda x+(1-\lambda)y\,|\, x\in X, y\in Y, \lambda\in [0,1]\},$
and the {\em relative interior} of a finite set $X\subset\mathbb{R}^m$ is defined as
$$\relint X=\left\{ \sum_{x\in X}\lambda_x x\,|\,\forall x\ \lambda_x>0, \sum_{x\in X}\lambda_x=1\right\}.$$
The polytope $\Delta_P*\Delta_Q$ has two triangulations:
$$T_P=\{\mathbf{x}_{P\cup Q}\setminus\{x_p\}\}_{p\in P} = \left\{x_{p_1}\cdots x_{p_{i-1}}x_{p_{i+1}}\cdots x_{p_l}x_{q_1}\cdots x_{q_{k-l}}\right\}_{i=1}^l$$
and
$$T_Q=\{\mathbf{x}_{P\cup Q}\setminus\{x_q\}\}_{q\in Q} = \left\{x_{p_1}\cdots x_{p_l} x_{q_1} \cdots x_{q_{i-1}} x_{q_{i+1}} \cdots x_{q_{k-l}}\right\}_{i=1}^{k-l}.$$
Here $P=\{p_1,\dots,p_l\}$, $Q=\{q_1,\dots,q_{k-l}\}$ and $\mathbf{x}_J=\{x_j\}_{j\in J}$ for any $J\subset\{1,\dots,n\}$.

The condition $\relint(\mathbf{x}_P)\cap\relint(\mathbf{x}_Q)=\{z\}$ implies $P\cap Q=\emptyset$. Thus, when the configuration $\bx(t)$ goes over a singular value $t_i, i=1,\dots, L$, in the Delaunay triangulation simplices $T_{P_i}$ are replaced by simplices $T_{Q_i}$ for some subsets $P_i,Q_i\subset\{1,\dots,n\}$, $P_i\cap Q_i=\emptyset$, $|P_i|,|Q_i|\ge 2$, $|P_i\cup Q_i|=k$. This transformation is called a {\em Pachner move}. The pair of subsets $(P_i,Q_i)$ we call the {\em type}\label{def:type_Pacher_move} of the Pachner move.  We assign to the transformation the letter $a_{P_i,Q_i}$.

Hence, the loop $\alpha$ produces a word
\begin{equation}\label{eq:Gamma_nk_braid_invariant}
\Phi(\alpha)=\prod_{i=1}^L a_{P_i,Q_i}
\end{equation}
 in the alphabet
$\mathcal A_n^k=\left\{ a_{P,Q}\,|\, P,Q\subset\{1,\dots,n\}, P\cap Q=\emptyset, |P\cup Q|=k, |P|,|Q|\ge 2\right\}.$

Now let us consider a generic homotopy $\alpha_s, s\in[0,1]$ between two generic loops $\alpha_0$ and $\alpha_1$. A loop $\alpha_s=\{\bx(s,t)\}_{t\in[0,1]}$ can contain a configuration of codimension $2$. This means that for some $t$ the configuration $\bx(s,t)$ has two different $k$-tuples of points, such that each of them lies on a sphere whose interior contains no points of $\bx(s,t)$. If these spheres do not coincide then their intersection contains at most $k-2$ points (the intersection can not contain $k-1$ points because $\bx(s,t)\in\tilde C_n(\mathbb{R}^{k-2})$). Hence, the simplices involved in one Pachner move can not be involved in the other one, so the Pachner moves can be fulfilled in any order.

If  the $k$-tuples of points lie on one sphere, there is a sphere with $k+1$ points of $\bx(s,t)$ on it and its interior contains no points of $\bx(s,t)$. These $k+1$ points span a simplicial polytope in $\mathbb{R}^{k-2}$. Such polytopes are described in~\cite{Gruenbaum}. The description uses the notion of {\em Gale transform}.

Let $X=\{x_1,\dots,x_n\}$ be a set of $n$ points in $\mathbb{R}^d$ such that $\dim\conv X = d$. Then $n\ge d+1$. Let $x_i=(x_{1i},\dots,x_{di})\in\mathbb{R}^d$, $i=1,\dots,n$, be the coordinates of the points of $X$. The matrix
$$
M = \left(\begin{array}{cccc}
x_{11} & x_{12} & \cdots & x_{1n}\\
x_{21} & x_{22} & \cdots & x_{2n}\\
\cdots &\cdots &\cdots &\cdots \\
x_{d1} & x_{d2} & \cdots & x_{dn}\\
1 & 1 & \cdots & 1
\end{array}\right)
$$
has rank $d+1$. Then the dimension of the space $\ker M = \{ b\in\mathbb{R}^n\,|\, Mb=0\}$ of dependencies between columns of $M$ is equal to $n-(d+1)$. Take any basis $b_j = (b_{j1},b_{j2},\dots b_{jn})$, $j=1,\dots, n-d-1,$ of $\ker M$ and write it in matrix form
\begin{equation}\label{eq:gale_b_matrix}
B = \left(\begin{array}{ccc}
b_{11} & \cdots & b_{1n}\\
\cdots &\cdots  &\cdots \\
b_{n-d-1,1} & \cdots & b_{n-d-1,n}
\end{array}\right).
\end{equation}
The columns of the matrix $B$ form a set $Y={y_1,\dots, y_n}$, $y_i=(b_{1i},\dots,b_{n-d-1,i}),$ in $\mathbb{R}^{n-d-1}$. The set $Y$ is called a {\em Gale transform} of the point set $X$. Gale transforms which correspond to different bases of $\ker M$ are linearly equivalent. The vectors of the Gale transform $Y$ may coincide.

%

Let $Y={y_1,\dots, y_n}$ be a Gale transform of $X$. The set
$$\bar Y = \{\bar y_1,\dots, \bar y_n\},\quad \bar y_i=\left\{\begin{array}{cl}\frac{y_i}{\|y_i\|}, & y_i\ne 0,\\ 0, & y_i=0 \end{array}\right.
$$
is called a {\em Gale diagram} of the point set $X$. It is a subset of $S^{n-d-2}\cup\{0\}$.

Denote $N=\{1,\dots,n\}$. Two subsets $\bar Y = \{\bar y_1,\dots, \bar y_n\}$ and $\bar Y' = \{\bar y'_1,\dots, \bar y'_n\}$ in $S^{n-d-2}\cup\{0\}$ are called {\em equivalent} if there is a permutation $\sigma$ of $N$ such that for any $J\subset N$
$$
0\in\relint \bar Y_J \Longleftrightarrow 0\in\relint \bar Y'_{\sigma(J)}.
$$
Here we denote $\bar Y_J = \{\bar y_i\}_{i\in J}$ and $\bar Y'_J = \{\bar y'_i\}_{i\in J}$.

The properties of Gale diagrams can be summarized as follows~\cite{Gruenbaum}.
\begin{theorem}\label{thm:gale_diagram}
\begin{enumerate}
\item Let $X$ be a set of $n$ points which are vertices of some polytope $P$ in $\mathbb{R}^d$ and $\bar Y$ be its Gale diagram. Then the set of indices $J\subset N$ defines a face of $P$ if and only if $0\in\relint Y_{N\setminus J}$.
\item Let $X$ and $X'$ be sets of vertices of polytopes $P$ and $P'$, $|X|=|X'|$, and $\bar Y$ and $\bar Y'$ be their Gale diagrams. Then $P$ and $P'$ are combinatorially equivalent if and only if $\bar Y$ and $\bar Y'$ are equivalent.
\item For any $n$-point set $\bar Y\in S^{n-d-2}\cup\{0\}$ such that $\bar Y$ spans $\mathbb{R}^{n-d-1}$ and $0$ lies in the interior of $\conv\bar Y$, there is an $n$-point set $X$ in $\mathbb{R}^d$ such that $\bar Y$ is a Gale diagram of $X$.
\end{enumerate}
\end{theorem}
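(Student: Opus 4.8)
The plan is to extract from the construction the single piece of linear algebra that drives all three statements, and then read off each part from it. Write $M$ for the $(d+1)\times n$ matrix above and $B$ for the matrix \eqref{eq:gale_b_matrix} whose rows form a basis of $\ker M$. Then $\operatorname{rowspace}(B)=\ker M$, and since every row of $M$ is orthogonal to $\ker M$ we obtain the duality $\operatorname{rowspace}(M)=(\ker M)^{\perp}=\operatorname{rowspace}(B)^{\perp}$. Unwinding this gives two dictionaries. First, a vector $c\in\R^n$ is an \emph{affine dependence} of $X$, i.e. $\sum_i c_i x_i=0$ and $\sum_i c_i=0$, if and only if $c\in\ker M=\operatorname{rowspace}(B)$, that is $c_i=\langle\mu,y_i\rangle$ for some $\mu\in\R^{n-d-1}$. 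Second, $c$ is a \emph{linear dependence} of $Y$, i.e. $\sum_i c_i y_i=0$, if and only if $c\in\operatorname{rowspace}(B)^{\perp}=\operatorname{rowspace}(M)$, that is $c_i=\langle a,x_i\rangle+t$ is an affine function evaluated at the points of $X$. These two dictionaries are the engine of the whole proof.

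For part 1, recall that $\{x_i:i\in J\}$ spans a face of $P$ precisely when there is an affine functional $f$ with $f(x_i)=0$ for $i\in J$ and $f(x_i)<0$ for $i\notin J$ (a supporting hyperplane meeting $P$ in exactly that face). Put $c_i=f(x_i)$. By the second dictionary $\sum_i c_i y_i=0$; since $c_i=0$ on $J$ this reads $\sum_{i\in N\setminus J}c_i y_i=0$ with all $c_i<0$, so with $\lambda_i=-c_i>0$ we get $0=\sum_{i\in N\setminus J}\lambda_i y_i$, i.e. $0\in\relint Y_{N\setminus J}$. Conversely, extending any such positive combination by zeros on $J$ produces a linear dependence of $Y$, hence by the dictionary the required affine $f$. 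Since multiplying each $y_i$ by a positive scalar does not affect whether $0$ is a strictly positive convex combination, the condition is unchanged if $Y$ is replaced by the normalised diagram $\bar Y$; the only care needed is the bookkeeping of strict versus non-strict inequalities, where all the content sits.

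Part 2 is then a formal consequence. By part 1 the face lattice of $P$ is encoded entirely by the family $\{J:0\in\relint \bar Y_{N\setminus J}\}$; a combinatorial isomorphism $P\to P'$ induces a bijection of vertices, hence a permutation $\sigma$ of $N$, and it is an isomorphism of face lattices exactly when it carries this family for $P$ to the corresponding family for $P'$. Comparing with the definition of equivalence of Gale diagrams shows this is precisely the assertion that $\bar Y$ and $\bar Y'$ are equivalent.

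For part 3 I would reverse the construction. Since $0$ lies in the interior of $\conv\bar Y$ and $\bar Y$ spans $\R^{n-d-1}$, the description of $\relint$ used in the paper yields coefficients $\lambda_i>0$ with $\sum_i\lambda_i\bar y_i=0$; set $y_i=\lambda_i\bar y_i$, which keeps the directions, hence the same diagram $\bar Y$, but now satisfies $\sum_i y_i=0$. Let $B$ be the $(n-d-1)\times n$ matrix with columns $y_i$; it has rank $n-d-1$ because $\bar Y$ spans, and each of its rows is orthogonal to $\mathbf 1=(1,\dots,1)$ because $\sum_i y_i=0$. Thus $L=\operatorname{rowspace}(B)\subset\mathbf 1^{\perp}$, so $\mathbf 1\in L^{\perp}=:W$ with $\dim W=d+1$. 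Choose a basis $r_1,\dots,r_d,\mathbf 1$ of $W$ and let $M$ have these rows; then $\operatorname{rowspace}(M)=W$, hence $\ker M=W^{\perp}=L=\operatorname{rowspace}(B)$, and $M$ has an all-ones bottom row. Reading the columns of the top $d$ rows as points $x_i\in\R^d$ gives $X$ with $\dim\conv X=d$ (as $\operatorname{rank}M=d+1$) whose Gale transform, computed with the rows of $B$ as basis of $\ker M$, is exactly $\{y_i\}$, so its Gale diagram is $\bar Y$. I expect the genuine obstacle to be part 1 — the supporting-hyperplane characterisation and its correct translation under the duality, including the strict/non-strict bookkeeping and the reduction from $Y$ to $\bar Y$ — while parts 2 and 3 are comparatively mechanical once that duality and the rescaling trick are in place.
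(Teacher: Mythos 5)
The paper gives no proof of this theorem: it is quoted from Gr\"unbaum's \emph{Convex polytopes} with a citation and used as a black box, so there is no in-paper argument to compare against. Your proof is correct and is essentially the standard one from that reference: the duality $\operatorname{rowspace}(M)=(\ker M)^{\perp}=\operatorname{rowspace}(B)^{\perp}$ translating affine dependences of $X$ into linear functionals on $Y$ and vice versa, the supporting-functional characterisation of faces for part 1, the face-lattice bookkeeping for part 2, and the rescaling $y_i=\lambda_i\bar y_i$ followed by reconstruction of $M$ from $L^{\perp}\ni\mathbf 1$ for part 3 all go through (the only unaddressed points are the degenerate conventions for $J=\emptyset$ or $J=N$ and the indices with $y_i=0$, which the paper's statement also leaves implicit).
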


The theorem implies (see~\cite{Gruenbaum})
that simplicial polytopes with $k+1$ vertices in $\mathbb{R}^{k-2}$ are in a bijection with standard Gale diagrams in $\mathbb{R}^2$ (defined uniquely up to isometries of the plane).

A {\em standard Gale diagram} of order $l=k+1$ is a subset $\bar Y$, $|\bar Y|=l$, of the vertices set $\{e^{\frac{\pi i p}{l}}\}_{p=0}^{2l-1}$ of the regular $2l$-gon inscribed in the unit circle $S^1$, such that:
\begin{enumerate}
\item any diameter of $S^1$ contains at most one point of $\bar Y$;
\item for any diameter of $S^1$, any open half-plane determined by it contains at least two points of $\bar Y$.
\end{enumerate}

The first property means the corresponding polytope is simplicial, the second means any of the $k+1$ vertices of the polytope is a face.

The number $c_l$ of standard Gale diagrams of order $l$ is equal to
$$
c_l= 2^{\left[\frac{l-3}{2}\right]}-\left[\frac{l+1}4\right]+\frac 1{4l}\sum_{h\colon 2\nmid h\mid l}\varphi(h)\cdot 2^{\frac lh},
$$
where $l=2^{a_0}\prod_{i=1}^{t}p_i^{a_i}$ is the prime decomposition of $l$, and $\varphi$ is Euler's function. For small $l$ the numbers are $c_5=1, c_6=2, c_7=5$, see \cref{fig:gale_standard_diagrams}.

\begin{figure}
\centering
\begin{tabular}{cc}
\includegraphics[height=50pt]{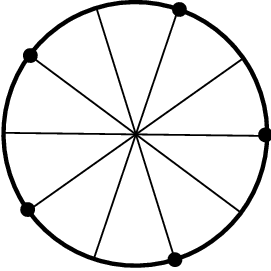} & \includegraphics[height=50pt]{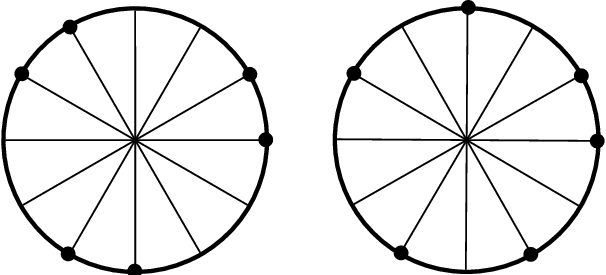}\\
 $l=5$ & $l=6$\\
 \multicolumn{2}{c}{\includegraphics[height=60pt]{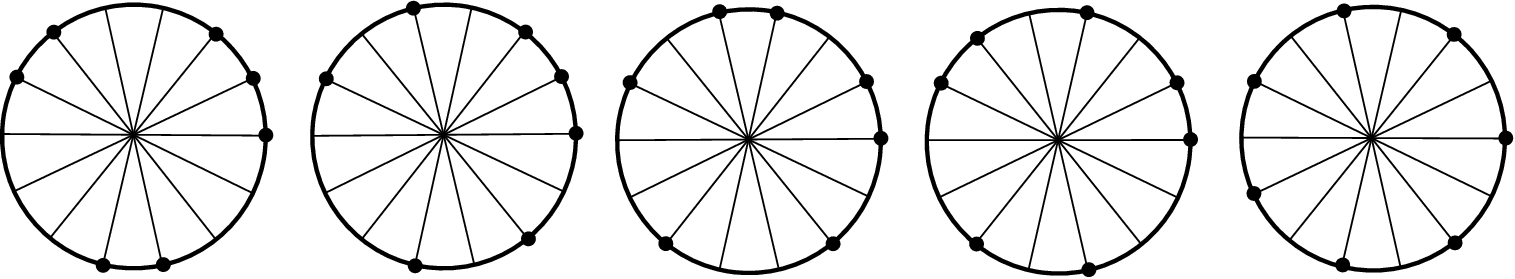}}\\
 \multicolumn{2}{c}{$l=7$}
\end{tabular}
\caption{Standard Gale diagrams of small order}\label{fig:gale_standard_diagrams}
\end{figure}

Let us describe the triangulations of the simplicial polytopes with $k+1$ vertices in $\mathbb{R}^{k-2}$.

Let $X=\{x_1,\dots,x_n\}$ be a subset in $\mathbb{R}^d$, so $x_i = (x_{i1},\dots, x_{id})$, $i=1,\dots, n$. Let $P=\conv X$ be the convex hull of $X$, assume that $\dim P = d$. A triangulation $T$ of the polytope $P$ with vertices in $X$ is called {\em regular}\label{def:regular_triangulation} if there is a height function $h\colon X\to\mathbb{R}$ such that $T$ is the projection of the lower convex of the lifting $X^h=\{x_1^h,\dots,x_n^h\}\subset\mathbb{R}^{d+1}$, where $x_i^h = (x_i, h(x_i))$. This means that a set of indices $J\subset\{1,\dots,n\}$ determines a face of $T$ if and only if there exists a linear functional $\phi$ on $\mathbb{R}^{d+1}$ such that $\phi(0,\dots,0,1)>0$ and $J=\{i\,|\, \phi(x^h_i) = \min_{x^h\in X^h}\phi(x^h)\}$. In case $T$ is regular we write $T=T(X,h)$. Any generic height function induces a regular triangulation.

The Delaunay triangulation of $X$ is regular with the height function $h\colon \mathbb{R}^d\to\mathbb{R}$, $h(z)=\|z\|^2= \sum_{i=1}^d z_i^2$ if $z=(z_1,\dots,z_d)\in\mathbb{R}^d$.

A height function $h\colon X\to\mathbb{R}$ can be regarded as a vector $h=(h_1,\dots, h_n)\in\mathbb{R}^n$ where $h_i=h(x_i)$. Denote $\beta(h)=Bh\in\mathbb{R}^{n-d-1}$ where $B$ is the matrix~\eqref{eq:gale_b_matrix} used to define a Gale transform of $X$. Let $\bar Y=\{\bar y_1,\dots,\bar y_n\}$ be a Gale diagram of $X$. Convex cones generated by the subsets of $\bar Y$ split the space $\mathbb{R}^{n-d-1}$ into a union of conic cells. A relation between the triangulation $T(X,h)$ and the Gale diagram $\bar Y$ can be described as follows.

\begin{theorem}\label{thm:gale_triangulation}
\begin{enumerate}
\item If $T(X,h)$ is a regular triangulation of $X$ then $\beta(h)$ belongs to a conic cell of maximal dimension in the splitting of $\mathbb{R}^{n-d-1}$ induced by $\bar Y$.
\item Let $J\subset N=\{1,\dots,n\}$. The set $X_J$ spans a cells of the triangulation $T(X,h)$ if and only if $\beta(h)\in\mathrm{concone}(\bar Y_{N\setminus J})$, where $\mathrm{concone}(X_{N\setminus J})$ is the convex cone spanned by the set $\bar Y_{N\setminus J}$.
\end{enumerate}
\end{theorem}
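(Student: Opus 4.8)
The plan is to translate the defining condition of a regular triangulation into a purely linear-algebraic statement about the vector $\beta(h)=Bh$, relying on a single algebraic fact: since the rows of $B$ form a basis of $\ker M$, we have $\mathrm{rowspace}(B)=\ker M$ and hence $\ker B=(\ker M)^{\perp}=\mathrm{rowspace}(M)$. I would begin with part 2 and deduce part 1 from it.

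First I would unwind the definition of $T(X,h)$. A subset $J\subset N$ spans a face of $T(X,h)$ exactly when there is a linear functional $\phi$ on $\R^{d+1}$ with positive last coordinate attaining its minimum over $X^h$ precisely along $J$. Normalizing the last coordinate to $1$, such a $\phi$ reads $\phi(z,z_{d+1})=\langle c,z\rangle+z_{d+1}$, so $\phi(x_i^h)=\langle c,x_i\rangle+h_i$. Subtracting the minimal value $m$, the vector $w$ defined by $w_i=\langle c,x_i\rangle+h_i-m$ satisfies $w_i=0$ for $i\in J$ and $w_i>0$ for $i\notin J$. As $c$ and $m$ range over all affine functionals, the difference $w-h$ ranges over exactly $\mathrm{rowspace}(M)$. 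Thus $X_J$ spans a face of $T(X,h)$ if and only if there exists $w\in h+\mathrm{rowspace}(M)$ with $w|_{J}=0$ and $w|_{N\setminus J}>0$.

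Next I would apply $B$. Because $\mathrm{rowspace}(M)=\ker B$, every such $w$ satisfies $Bw=Bh=\beta(h)$; and since $w$ vanishes on $J$, we get $\beta(h)=\sum_{i\in N\setminus J}w_i y_i$ with all coefficients strictly positive. Conversely, any strictly positive representation $\beta(h)=\sum_{i\in N\setminus J}w_i y_i$ extends (by $w_i=0$ on $J$) to a vector with $B(w-h)=0$, hence $w-h\in\ker B=\mathrm{rowspace}(M)$, and reversing the first step produces the required functional $\phi$. This yields part 2: $X_J$ spans a cell of $T(X,h)$ iff $\beta(h)$ lies in the set of strictly positive combinations of $\{y_i\}_{i\in N\setminus J}$, which is the relative interior of $\mathrm{concone}(\bar Y_{N\setminus J})$, since normalizing nonzero $y_i$ to $\bar y_i$ and discarding the zero vectors does not change the cone.

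Finally, part 1 would follow from part 2 by dimension count. When $T(X,h)$ is a genuine triangulation, its top simplices are indexed by $(d+1)$-subsets $J$, so $|N\setminus J|=n-d-1=\dim\R^{\,n-d-1}$; genericity forces the $n-d-1$ vectors $\bar Y_{N\setminus J}$ to be linearly independent, so $\mathrm{concone}(\bar Y_{N\setminus J})$ is full-dimensional and $\beta(h)$ lies in its interior, i.e. in a conic cell of maximal dimension. The step I expect to demand the most care is precisely this last identification: one must verify that, as $J$ runs over the top simplices of a triangulation, the full-dimensional cones $\mathrm{concone}(\bar Y_{N\setminus J})$ are exactly the top chambers of the fan refining $\R^{\,n-d-1}$ induced by $\bar Y$ and that distinct simplices give chambers meeting only in their boundaries — equivalently, that the relative-interior condition of part 2 singles out a unique chamber. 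This is where the equivalence between a regular triangulation and a maximal cell of the secondary fan is genuinely being invoked, and where the strict-versus-nonstrict positivity of the cone combination must be tracked carefully.
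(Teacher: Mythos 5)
The paper itself gives no proof of Theorem~\ref{thm:gale_triangulation}: like Theorem~\ref{thm:gale_diagram}, it is quoted as a known fact from the theory of Gale diagrams and secondary polytopes (cf.~\cite{Gruenbaum,GKZ}), so there is nothing in the text to compare your argument against. Judged on its own, your proof of part 2 is correct and is the standard one: the identification $\ker B=(\ker M)^{\perp}=\mathrm{rowspace}(M)$, together with the observation that $\mathrm{rowspace}(M)$ consists exactly of the vectors $(\ell(x_1),\dots,\ell(x_n))$ for affine $\ell$, turns the lower-face condition into the existence of $w\in h+\mathrm{rowspace}(M)$ with $w|_J=0$ and $w|_{N\setminus J}>0$, and applying $B$ gives $\beta(h)=\sum_{i\in N\setminus J}w_iy_i$ with strictly positive coefficients. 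Note that what you actually prove (and what is true) is the \emph{relative interior} version, $\beta(h)\in\relint\mathrm{concone}(\bar Y_{N\setminus J})$; the printed statement with the closed cone is imprecise on this point, and your remark that rescaling $y_i$ to $\bar y_i$ and dropping zero vectors does not change the cone is the right way to pass from $Y$ to $\bar Y$.

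Part 1, however, is left with a genuine gap, which you flag yourself. Placing $\beta(h)$ in the interior of the single full-dimensional simplicial cone $\mathrm{concone}(\bar Y_{N\setminus J})$ for one top cell $J$ does not yet put it in a maximal cell of the \emph{common refinement}: you must rule out that $\beta(h)$ lies on a wall contributed by some other subset $S\subset N$. This can be closed using part 2 itself. If $\beta(h)$ lay in a cone $\mathrm{concone}(\bar Y_S)$ of dimension less than $n-d-1$, then $\beta(h)\in\relint\mathrm{concone}(\bar Y_{S'})$ for some $S'\subset S$ whose vectors span a proper subspace; by part 2 the set $J'=N\setminus S'$ would then be a face of $T(X,h)$. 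But $\bar Y_{S'}$ failing to span $\R^{n-d-1}$ is equivalent, via the existence of a nonzero $\lambda$ with $\lambda B$ supported on $J'$, i.e.\ a nonzero affine dependency among the points $X_{J'}$, to $X_{J'}$ being affinely dependent --- contradicting the hypothesis that $T(X,h)$ is a triangulation, all of whose faces are simplices. Since the boundary of every full-dimensional cone $\mathrm{concone}(\bar Y_S)$ is covered by cones spanned by proper subsets of $\bar Y_S$, this shows $\beta(h)$ avoids all walls, and with this supplement your argument is complete.
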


Let $P$ be a simplicial polytopes with $l=k+1$ vertices in $\mathbb{R}^{l-3}$ and $\bar Y=\{\bar y_1,\dots,\bar y_l\}$ be the corresponding standard Gale diagram. By \cref{thm:gale_triangulation} there are $l$ different regular triangulation which correspond to open sectors between the rays spanned by the vectors of $\bar Y$. The graph whose vertices are combinatorial classes of triangulations of $P$ and the edges are Pachner moves, is a cycle. Let us find which Pachner moves appear in this cycle.

We change the order of vertices of $P$ (and hence, the order of the points of $\bar Y$) so that the points $\bar y_1,\dots,\bar y_l$ appear in this sequence when one goes counterclockwise on the unit circle. For each $i$ denote $R_{\bar Y}(i)$ (respectively, $L_{\bar Y}(i)$) be the set of indices $j$ of vectors $\bar y_j$ that lie in the right (respectively, left) open half-plane incident to the oriented line spanned by the vector $\bar y_i$. Then the Pachner move which occurs when the vector $\beta(h)$ passes $\bar y_i$ from right to left, will be marked with the letter $a_{R_{\bar Y}(i),L_{\bar Y}(i)}\in \mathcal{A}_l^{l-1}$. The Pachner moves of the whole cycle of triangulation give the word $w_{\bar Y}=\prod_{i=1}^l a_{R(i),L(i)}$.

Thus, we have the relation $w_{\bar Y}=1$ that we should impose in order to make the words like $\Phi(\alpha)$ independent on resolutions of configurations of codimension $2$.

\begin{example}
Consider the standard Gale diagram of order $5$, \cref{fig:gale_d2_marked}. Then we have $R(1)=\{4,5\}$, $L(1)=\{2,3\}$, $R(2)=\{1,5\}$, $L(2)=\{3,4\}$ etc. The corresponding word is equal to
$$
w =a_{45,23}a_{15,34}a_{12,45}a_{23,15}a_{34,12}.
$$
\begin{figure}
\sidecaption{}
\centering\includegraphics[width=80pt]{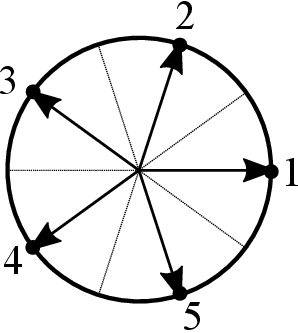}
\caption{Standard Gale diagram of order $5$}\label{fig:gale_d2_marked}
\end{figure}
\end{example}

We can give now the definition of $\Gamma_n^k$ groups.

\begin{definition}\label{def:Gamma_nk}
Let $4\le k\le n$. The group $\Gamma_n^k$ is the group with 
the generators
$$
a_{P,Q},\quad  P,Q\subset\{1,\dots,n\}, P\cap Q=\emptyset, |P\cup Q|=k, |P|,|Q|\ge 2,
$$
and the relations:
\begin{enumerate}
\item $a_{Q,P}=a_{P,Q}^{-1}$;
\item {\em far commutativity}: $a_{P,Q}a_{P',Q'}=a_{P',Q'}a_{P,Q}$ for each generators $a_{P,Q}$, $a_{P',Q'}$ such that
\begin{align*}
|P\cap(P'\cup Q')|&<|P|, & |Q\cap(P'\cup Q')|&<|Q|,\\
 |P'\cap(P\cup Q)|&<|P'|, & |Q'\cap(P\cup Q)|&<|Q'|;
\end{align*}
\item {\em $(k+1)$-gon relations}: for any standard Gale diagram $\bar Y$ of order $k+1$ and any subset $M=\{m_1,\dots,m_{k+1}\}\subset\{1,\dots,n\}$
$$ \prod_{i=1}^{k+1} a_{M_R(\bar Y,i), M_L(\bar Y,i)}=1,$$
where $M_R(\bar Y,i) = \{m_j\}_{j\in R_{\bar Y}(i)}$,  $M_L(\bar Y,i) = \{m_j\}_{j\in L_{\bar Y}(i)}$.
\end{enumerate}
\end{definition}

\begin{example}
Let us write the $(k+1)$-gon relations in $\Gamma_n^k$ for small $k$.

The group $\Gamma_n^4$ has one pentagon relation
$$
a_{m_4m_5,m_2m_3}a_{m_1m_5,m_3m_4}a_{m_1m_2,m_4m_5}a_{m_2m_3,m_1m_5}a_{m_3m_4,m_1m_2}=1.
$$
Our definition of $\Gamma_n^4$ differs slightly from the definition of these groups in~\cite{ManturovKim}. To obtain the groups defined in~\cite{ManturovKim} we should add relations
$a_{m_1m_2,m_3m_4}^2=1$ and
\begin{gather*}
a_{m_1m_2,m_3m_4}a_{m_1m_2,m_5m_6}=a_{m_1m_2,m_5m_6}a_{m_1m_2,m_3m_4},\\
a_{m_1m_2,m_3m_4}a_{m_1m_5,m_2m_6}=a_{m_1m_5,m_2m_6}a_{m_1m_2,m_3m_4},
\end{gather*}
where $|\{m_3,m_4\}\cap\{m_5,m_6\}|\le 1$.

The group $\Gamma_n^5$ has two hexagon relations
\begin{gather*}
a_{m_5m_6,m_2m_3m_4}a_{m_1m_5m_6,m_3m_4}a_{m_1m_2,m_4m_5m_6}a_{m_1m_2m_3,m_5m_6}a_{m_3m_4,m_1m_2m_6}a_{m_3m_4m_5,m_1m_2}=1,\\
a_{m_5m_6,m_2m_3m_4}a_{m_1m_5m_6,m_3m_4}a_{m_1m_2m_6,m_4m_5}a_{m_1m_2m_3,m_5m_6}a_{m_3m_4,m_1m_2m_6}a_{m_4m_5,m_1m_2m_3}=1
\end{gather*}
as we have seen in the previous subsection.

\end{example}

\begin{theorem}\label{thm:Gamma_nk_invariant}
Formula~\eqref{eq:Gamma_nk_braid_invariant} defines a correct homomorphism $$\Phi\colon\pi_1(\tilde C_n(\mathbb{R}^{k-2}))\to\Gamma_n^k.$$
\end{theorem}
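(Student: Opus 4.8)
The plan is to mirror the proof of Theorem~\ref{thm:Gamman5_invariant}, which is the $k=5$ special case, and upgrade each step so that it works for arbitrary $k$. First I would verify that $\Phi$ is well-defined on a single generic loop: for a generic $\alpha$ the set of singular times is finite, and at each singular time exactly one sphere carries exactly $k$ points of $\bx$ with empty interior; the discussion preceding the statement already identifies the combinatorial transformation as a Pachner move of some type $(P_i,Q_i)$ with $P_i\cap Q_i=\emptyset$, $|P_i\cup Q_i|=k$, $|P_i|,|Q_i|\ge 2$, so the word $\Phi(\alpha)=\prod_{i=1}^L a_{P_i,Q_i}$ lies in the correct alphabet $\mathcal A_n^k$. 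I should note that a Pachner move and its reverse correspond to the same unordered split but opposite letters $a_{P,Q}$ and $a_{Q,P}=a_{P,Q}^{-1}$; relation~(1) of Definition~\ref{def:Gamma_nk} guarantees these are mutually inverse, so the orientation-dependent choice of sign is consistent.

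\smallskip
The core of the argument is homotopy invariance. Given a generic homotopy $\alpha_s$, $s\in[0,1]$, between generic loops $\alpha_0,\alpha_1$, the word $\Phi(\alpha_s)$ changes only as $s$ passes finitely many exceptional parameter values at which the loop $\alpha_s$ develops a codimension-$2$ degeneracy. Here I would invoke the codimension-$2$ analysis carried out just before the theorem. There are exactly two possibilities. In the first, two distinct spheres each carry $k$ points with empty interiors; since $\bx(s,t)\in\tilde C_n(\R^{k-2})$ forbids $k-1$ points on a common $(k-4)$-sphere, the two spheres meet in at most $k-2$ points, hence the two point-sets share fewer than the maximal number of indices, and the corresponding letters $a_{P,Q}$, $a_{P',Q'}$ satisfy the index inequalities defining far commutativity — so the two Pachner moves can be performed in either order, and the transition of $\Phi$ across this exceptional value is exactly relation~(2). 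In the second possibility, a single sphere carries $k+1$ points of $\bx(s,t)$ with empty interior; these points span a simplicial polytope with $k+1$ vertices in $\R^{k-2}$, whose triangulation graph is, by the Gale-diagram analysis (Theorems~\ref{thm:gale_diagram} and~\ref{thm:gale_triangulation}), a cycle whose Pachner moves spell out precisely the word $w_{\bar Y}=\prod_{i=1}^{k+1}a_{M_R(\bar Y,i),M_L(\bar Y,i)}$ for the appropriate standard Gale diagram $\bar Y$. As $s$ crosses the exceptional value, the segment of $\Phi(\alpha_s)$ local to this degeneracy is replaced by the complementary arc of the same cycle, so $\Phi(\alpha_s)$ is multiplied by $w_{\bar Y}^{\pm1}$, which is trivial in $\Gamma_n^k$ by the $(k+1)$-gon relation~(3). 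Hence $\Phi(\alpha_s)$ is locally constant in $s$ and therefore constant.

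\smallskip
Two bookkeeping points must be handled to make the local picture match the global relation. First, the Gale-diagram description yields the relation only up to a cyclic permutation of its letters and up to inversion (depending on which vertex $\beta(h)$ passes and in which rotational direction the homotopy sweeps the degeneracy); I would observe that relation~(3), being a product set equal to $1$, is invariant under cyclic permutation and, via relation~(1), under inversion, so any of these presentations of the crossing is absorbed. Second, the far-apart letters elsewhere in the word do not interfere with the local modification, which I justify by the same far-commutativity inequalities used above: any generator not involved in the degenerate $(k+1)$-point configuration commutes past the affected block. Combining both cases, passing $s$ through any exceptional value leaves $\Phi(\alpha_s)$ unchanged in $\Gamma_n^k$, so $\Phi(\alpha_0)=\Phi(\alpha_1)$ and $\Phi$ descends to a well-defined homomorphism on $\pi_1(\tilde C_n(\R^{k-2}))$.

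\smallskip
I expect the main obstacle to be the precise matching in the single-sphere case: showing rigorously that the local arc of Pachner moves read off from the cycle coincides, as a subword of $\Phi(\alpha_s)$, with exactly the product appearing in relation~(3), with correct indexing and orientation. This requires careful use of Theorem~\ref{thm:gale_triangulation}(2) to translate ``$\beta(h)$ lies in a given conic cell'' into ``$X_J$ is a simplex of the triangulation,'' and then tracking how $\beta(h)$ crosses successive rays spanned by the Gale vectors $\bar y_i$ as the homotopy parameter varies, so that the order in which the $k+1$ Pachner moves occur agrees with the counterclockwise enumeration built into $M_R(\bar Y,i)$ and $M_L(\bar Y,i)$. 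The far-commutativity case, by contrast, is essentially immediate from the dimension count on the intersection of the two spheres, so the genuine work is concentrated in the Gale-theoretic identification.
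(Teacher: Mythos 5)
Your proposal is correct and follows essentially the same route as the paper: the paper's own proof is a short appeal to the codimension-$2$ analysis carried out before Definition~\ref{def:Gamma_nk} (two disjoint empty spheres giving far commutativity, and $k+1$ points on one empty sphere giving the $(k+1)$-gon relation via the Gale-diagram cycle), which is exactly the case division you spell out. Your version is simply a more detailed and more careful write-up of the same argument, and the ``main obstacle'' you identify is indeed the part the paper leaves implicit.
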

\begin{proof}
We need to show that the element $\Phi(\alpha)$ does not depend on the choice of the representative $\alpha$ in a given homotopy class. Given a generic homotopy $\alpha(\tau), \tau\in[0,1],$  of loops in $\tilde C_n(\mathbb{R}^{k-2})$, the transformations of the words $\Phi(\alpha(\tau))$ correspond to point configurations of codimension $2$, considered above the definition of $\Gamma_n^k$, and thus are counted by the relations in the group $\Gamma^k_{n}$. Therefore, the element $\Phi(\alpha(\tau))$ of the group $\Gamma^k_{n}$ remains the same when $\tau$ changes.
\end{proof}

\section{Main results}

\subsection{$\Gamma^k_n$-valued invariant of braids on manifolds}

\begin{theorem} \label{thm:gamma_geometrical_invariant}
Let $(M^{d},g)$ be a manifold with a Riemannian metric and $B(M_{g},n)_{j}$, $j=1,\dots,p$ be its geometrical braid groups. Then for any $j$ there is a well defined mapping
$B(M_g,n)_j\to \Gamma_{n}^{d+2}$.
\end{theorem}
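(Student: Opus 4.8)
The plan is to reproduce the construction of $\Phi$ from Theorem~\ref{thm:Gamma_nk_invariant}, now reading the group element off a loop in the component $(M_g^{dn})_j$ of the metrical manifold of triangulations instead of a loop in $\tilde C_n(\R^{k-2})$; here $k=d+2$, so that $\R^{k-2}=\R^d$ is the local model of $M$. First I would perturb a given class in $\pi_1((M_g^{dn})_j)$ to a loop in general position with respect to the stratification, so that it avoids all strata of codimension $\ge 2$ and meets the codimension-$1$ stratum transversally at finitely many moments $0<t_1<\dots<t_L<1$. At each $t_p$ the Delaunay triangulation undergoes a flip in which $d+2$ points lie on one geodesic sphere $S^{d-1}$ whose bounded ball contains no further points; by the analysis preceding Definition~\ref{def:Gamma_nk} this flip replaces the simplices $T_{P_p}$ by $T_{Q_p}$ for disjoint index sets with $|P_p\cup Q_p|=k$, and I assign to it the letter $a_{P_p,Q_p}^{\pm 1}$. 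Concatenating yields a word $\Phi(\alpha)=\prod_p a_{P_p,Q_p}\in\Gamma_n^{d+2}$. Composition of loops corresponds to juxtaposition of the associated flip sequences, so $\Phi$ is multiplicative once it is well defined; the whole content is therefore the independence of $\Phi(\alpha)$ of the chosen representative.

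To prove this invariance I would take a generic homotopy $\alpha(\tau)$, $\tau\in[0,1]$, between two general-position loops. A generic $2$-parameter family is transverse to the codimension-$1$ stratum except at finitely many isolated points where it meets the codimension-$2$ stratum, so $\Phi(\alpha(\tau))$ can change only as $\tau$ crosses one such point. By the description in Section~\ref{sec:manifold_of_triangulations}, a codimension-$2$ configuration has exactly one of two local types: two disjoint geodesic spheres each carrying $d+2$ points with empty interior, or a single geodesic sphere carrying $d+3$ points with empty interior. In the first case the two participating index sets meet in at most $k-2$ points, so no simplex is shared between the two flips and they may be performed in either order; this is recorded by the far-commutativity relation of Definition~\ref{def:Gamma_nk}. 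In the second case the change in the word is governed by the cyclic graph of triangulations of the simplicial $(d+3)$-vertex polytope spanned by these points, which is precisely the $(k+1)$-gon relation. Hence every elementary modification of the word is trivial in $\Gamma_n^{d+2}$, and $\Phi(\alpha(\tau))$ is constant in $\tau$.

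The step that needs genuine care, and which I expect to be the main obstacle, is the passage from the Euclidean local model underlying Definition~\ref{def:Gamma_nk} to the Riemannian setting: I must show that near a codimension-$2$ event the combinatorics of the degeneration on $(M_g^{dn})_j$ coincide with those analysed for $\tilde C_n(\R^{k-2})$. For this I would localise. The $d+2$ or $d+3$ points taking part in a degeneration lie on a single geodesic sphere, hence inside a geodesic ball whose radius tends to $0$ as the configuration approaches the stratum; in normal coordinates centred there the metric is $C^{2}$-close to the Euclidean one, so the relevant geodesic spheres are small perturbations of affine spheres and the empty-circumscribed-sphere Delaunay condition is a small perturbation of its Euclidean counterpart. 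Since the combinatorial type of a simplicial polytope with $d+3$ vertices is a discrete invariant encoded by its standard Gale diagram (Theorem~\ref{thm:gale_diagram}), and the induced cycle of triangulations is stable under sufficiently small perturbations of the points and of the height function (Theorem~\ref{thm:gale_triangulation}), the $(k+1)$-gon relation valid in the Euclidean model persists here. Granting this robustness, together with the local existence and genericity of Delaunay triangulations (the subtlety flagged through~\cite{BDGM}), the map $\Phi$ descends to a well-defined homomorphism $B(M_g,n)_j\to\Gamma_n^{d+2}$.
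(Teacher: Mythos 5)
Your proposal follows the same route as the paper: read off a word from the codimension-$1$ crossings of a generic loop, and check invariance under generic homotopies by matching codimension-$2$ degenerations (two disjoint empty spheres with $d+2$ points, or one empty sphere with $d+3$ points) with the far-commutativity and $(k+1)$-gon relations of $\Gamma_n^{d+2}$. The only difference is that you explicitly justify the passage from the Euclidean local model to the Riemannian setting via normal coordinates and stability of the Gale-diagram combinatorics, a point the paper's proof passes over in silence; this is a welcome extra precaution, not a divergence in method.
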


\begin{proof}
By \cref{def:geometric_n_braid}, the space of geometrical braids on the manifold $M^d$ is the fundamental group of the manifold of triangulations: $B(M_g,n)_j=\pi_1((M^{dn}_g)_j)$. Hence, we need to construct a mapping from homotopy classes of loops in $(M^{dn}_g)_j$ into the group $\Gamma_{n}^{d+2}$.

Let us fix a number $j$ and consider a loop
$\alpha = (x_1(t), \dots, x_n(t)), t\in [0,1]$
in $(M^{dn}_g)_j$, where for each $i=1,\dots, n$ the point $x_i(t)\in M^d$. For each $t$ the collection of points $(x_1(t), \dots, x_n(t))$ defines a Delaunay triangulation $T(t)$ of the manifold $M$. If the loop $\alpha$ is in general position, then there is a finite number of moments $0<t_1<\dots < t_N<1$ when the combinatorial structure of the triangulation $T(t)$ changes. Each of those moments $t_p$ corresponds to a codimension 1 configuration of points of the manifold $M^d$, and at each of them the triangulation undergoes some Pachner move, which transforms the triagulation simplices $T_{P_p}$ into simplices $T_{Q_p}$, where $P_p, Q_p \subset \{1, \dots, n\}, \, P_p\cap Q_p=\emptyset, \, |P_p|, |Q_p|\ge 2$ and $|P_p\cup Q_p|=d+2$. To each of those moments we attribute the element $a_{P_p,Q_p}$ of the group $\Gamma_n^{d+2}$.

Thus a loop $\alpha$ produces a word $\Phi(\alpha)=\prod_{p=1}^N a_{P_p,Q_p}.$

Now we need to prove that this mapping $\Phi\colon \pi((M^{dn}_g)_j) \to \Gamma_n^{d+2}$ is well defined: the element $\Phi(\alpha)$ does not depend on the choice of a representative in a given homotopy class.

Let $\alpha(\tau)$ be a generic homotopy, where $\tau\in [0,1]$ and $\alpha(0)=\alpha$. With it we associate a family of words $\Phi(\alpha(\tau))\in\Gamma_n^{d+2}$. Note that the word $\Phi(\alpha(\tau))$ changes whenever the loop passes through a point of codimension 2 in the manifold of triangulations $M^{dn}_g$. But such configurations of points in $M^d$ exactly correspond to the relations of the group $\Gamma_n^{d+2}$. Therefore the element $\Phi(\alpha(\tau))$ of the group $\Gamma_n^{d+2}$ remains the same when $\tau$ changes.
\end{proof}

Quite analogously, we get the following

\begin{theorem}
Let $M^{d}$ be a smooth manifold of dimension $d$ and $B(M,n)_j$, $j=1,\dots,q$, be its topological braid groups.
Then for any $j$ there is a well defined mapping $B(M,n)_j\to \Gamma_{n}^{d+2}$.
\end{theorem}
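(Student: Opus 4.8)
The plan is to reproduce the proof of Theorem~\ref{thm:gamma_geometrical_invariant} almost verbatim, with one conceptual substitution: in the geometrical case the invariant was extracted from loops and homotopies living inside a genuine smooth stratified manifold $M_g^{dn}$, whereas here the object $M^{dn}_{top}$ is available only through its $2$-frame, so every transversality argument must be replaced by a cellular one. First I would recall that the fundamental group of a CW-complex is determined by its $2$-skeleton. Hence, by Definition~\ref{def:topological_n_braid}, it suffices to work with the $2$-frame constructed in Section~\ref{sec:manifold_of_triangulations} and to present $\pi_1((M^{dn}_{top})_j)$ by van Kampen's theorem: the generators are the oriented $1$-cells and the relators are the boundary words of the $2$-cells.

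Next I would define the target map on generators. By construction each $1$-cell is an equivalence class of adjacencies between two generic $0$-strata, and crossing the intervening codimension~$1$ stratum is precisely one Pachner move whose type $(P,Q)$ satisfies $P\cap Q=\emptyset$, $|P|,|Q|\ge 2$, $|P\cup Q|=d+2$. I assign to it the letter $a_{P,Q}\in\Gamma_n^{d+2}$, checking that the type is invariant under the stratification-preserving homeomorphisms defining the equivalence class, so that the letter depends only on the $1$-cell. This yields a homomorphism from the free group on the oriented $1$-cells to $\Gamma_n^{d+2}$; reversing the orientation of a cell replaces $a_{P,Q}$ by $a_{Q,P}=a_{P,Q}^{-1}$, which is exactly relation~(1) of Definition~\ref{def:Gamma_nk}, so the assignment is compatible with edge cancellation.

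It then remains to verify that every $2$-cell relator is carried to a relation of $\Gamma_n^{d+2}$. The boundary of a $2$-cell is a disc meeting the codimension~$2$ set once, and the codimension~$2$ strata come in exactly two kinds: two independent spheres each carrying $d+2$ points, and one sphere carrying $d+3$ points. In the first case the two moves act on index sets whose pairwise intersections are small enough, so the boundary word is a far-commutativity relation~(2); in the second case the cycle of triangulations around the singular sphere is governed by the standard Gale diagram of order $d+3$, as in Theorem~\ref{thm:gale_triangulation} and the discussion preceding Definition~\ref{def:Gamma_nk}, and the boundary word is one of the $(d+3)$-gon relations~(3). Hence the map annihilates all relators and descends to a well-defined homomorphism $B(M,n)_j=\pi_1((M^{dn}_{top})_j)\to\Gamma_n^{d+2}$.

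I expect the main obstacle to be not the algebra but the bookkeeping of the $2$-frame: one must confirm that the cells glued from the generic strata of all the metric manifolds $M_{g_\alpha}^{dn}$ fit together consistently---that the type of a flip and the boundary word of a codimension~$2$ disc are genuine invariants of their equivalence classes, and that the enumeration of codimension~$2$ strata into the two families above is exhaustive, so that no relation beyond~(1)--(3) is imposed. Once this cellular consistency is in place the conclusion is immediate, and the argument is indeed ``quite analogous'' to the geometrical case as claimed.
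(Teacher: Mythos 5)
Your proposal is correct and rests on the same geometric content as the paper's proof: a crossing of the codimension-$1$ stratum contributes a letter $a_{P,Q}$, the type of the flip is an invariant of the equivalence class defining the cell (in particular independent of the auxiliary metric), and codimension-$2$ configurations realize exactly the relations of $\Gamma_n^{d+2}$. Where you differ is in how the homotopy-invariance step is organized. The paper's proof concentrates entirely on showing that the letter attached to an intersection point of a loop with the codimension-$1$ stratum does not depend on which metric $g_\alpha$ is used to model a neighbourhood of that point (via the homeomorphism $h$ from the definition of cell equivalence), and then defers the independence of $\Phi(\gamma)$ of the representative of the homotopy class to Theorem~\ref{thm:gamma_geometrical_invariant}, i.e.\ to the geometrical case. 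You instead exploit the fact that $M^{dn}_{top}$ is only given as a $2$-frame and present $\pi_1$ cellularly: generators from oriented $1$-cells, relators from $2$-cell boundary words, followed by a direct check that each relator maps to a far-commutativity or $(d+3)$-gon relation. This buys you a self-contained argument that avoids speaking of a ``generic homotopy'' inside a space defined only up to its $2$-skeleton --- arguably the cleaner route --- at the price of the bookkeeping you flag, namely that the flip type and the boundary word are genuine invariants of their equivalence classes; that bookkeeping is precisely the content of the paper's local-metric comparison, so nothing essential is missing. Both arguments share the same tacit assumption, which the paper also takes for granted, that the codimension-$2$ strata fall into exactly the two families (two independent empty spheres with $d+2$ points each, or one empty sphere with $d+3$ points), so no relations beyond those of Definition~\ref{def:Gamma_nk} are needed.
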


\begin{proof}
	By \cref{def:topological_n_braid}, the topological braids group on the manifold $M^d$ is the fundamental group of the manifold of triangulations: $B(M,n)_j=\pi_1((M^{dn}_{top})_j)$.
	
	Consider a loop $\gamma$ in the manifold $M^{dn}_{top}$ and let $\gamma$ be in general position. There is a finite number of intersections $\mathbf{x}_1,\dots, \mathbf{x}_k$ between the loop $\gamma$ and the codimension 1 stratum of the manifold of triangulations, where the triangulation undergoes a flip. Consider an intersection point $\mathbf{x}_i$ and its neighbourhood $U$. The intersection point $\mathbf{x}_i$ is a set of points $x_{1,i},\dots, x_{n,i}\in M$.
	
	Consider two metrics $g_{\alpha}$ and $g_{\beta}$ on the manifold $M$, yielding a structure of geometrical manifold of triangulations on the neighbourhood $U$. Let us denote the neighbourhood $U$ equipped with the corresponding metric by $U_{g_{\alpha}}$ and $U_{g_{\beta}}$, respectively. As it was described in \cref{sec:manifold_of_triangulations}, equivalence of strata of the manifolds of triangulation $M^{dn}_{g_{\alpha}}$ and $M^{dn}_{g_{\beta}}$ is defined by an existence of a homeomorphism $h\colon M^{dn}_{g_{\alpha}}\to M^{dn}_{g_{\beta}}$ with certain properties.
	
	The arc $l=\gamma\cap U\subset M^{dn}_{top}$ gives rise to an arc $l_1\subset U_{g_{\alpha}}$ and an arc $l_2 \subset U_{g_{\beta}}$, and $l_2=h(l_1)$ where $h$ is the homeomorphism from the definition of cell equivalence. Likewise, the intersection point $X_i\in U$ gives a point $X_i'\in U_{g_{\alpha}}$ and a point $X_i''\in U_{g_{\beta}}$. Naturally, those points are the intersections between the codimension 1 stratum of the corresponding manifold and the arcs $l_1, l_2$, respectively.
	
	At the point $\mathbf{x}_i$ a certain Pachner move occurs. The type of this move is independent of the choice of metric on the manifold. Therefore, the Pachner moves occurring in the points $\mathbf{x}_i', \mathbf{x}_i''$ of the manifolds $U_{g_{\alpha}}, U_{g_{\beta}}$ are one and the same. Choosing the numbering of the $n$ points which form the vertices of the triangulations, we hence obtain the same generator of the group $\Gamma_n^{d+2}$ corresponding to this intersection point. Therefore the letter which appears in the word $\Phi(\gamma)$ does not depend on the choice of the metric.
	
	Therefore the mapping $\Phi$ is defined on the fundamental group of the topological manifold of triangulations $M^{dn}_{top}$ and due to \cref{thm:gamma_geometrical_invariant} it does not depend on the choice of a representative in the homotopy class of a loop. Hence, it is a well defined mapping from $B(M,n)_j$ to the group $\Gamma_{n}^{d+2}$.
\end{proof}

Hence, we get invariants of smooth metrical manifolds and of topological manifolds: these invariants are the corresponding images of the fundamental group of the manifold of triangulations in the corresponding group $\Gamma^k_n$.


%
%

%

\subsection{Triangulations of polyhedra and groups $\Gamma_n^k$}

Triangulations of a given convex polyhedra are well studied. If we deal with stable triangulations, the set of those form a simplicial complex \cite{GKZ}. In the present section we show that they can be treated as elements of a certain group provided that some fixed triangulation plays the role of the unit element.

Let $\bx = \{x_1,\dots,x_n\}\subset \mathbb{R}^d$,  $n>d$, be the vertex set of a polytope $W$ in general position, namely, any $d+1$ points of $\bx$ do not belong to one hyperplane. Let $\mt$ be the set of regular triangulations of $W$ (see page~\pageref{def:regular_triangulation}). We can consider $\mt$ as a graph whose vertices are regular triangulations and whose edges are Pachner moves. This graph is connected, see~\cite{Matveev}.

Choose a regular triangulation $T_0\in\mt$. For any other triangulation $T\in\mt$ there is a path $\gamma=e_1e_2\dots e_l$ from $T_0$ to $T$ where each edge $e_i$ is a Pachner move of type $(P_i,Q_i)$, see page~\pageref{def:type_Pacher_move}. We assign the word $\varphi(T)=\prod_{i=1}^l a_{P_i,Q_i}\in \Gamma_n^{d+2}$ to the triangulation $T$.

Let $Cay(\Gamma_n^{d+2})$ be the Cayley graph of the group $\Gamma_n^{d+2}$ corresponding to the group presentation given in \cref{def:Gamma_nk}.

\begin{theorem}\label{thm:Gamma_and_polytope_triangulations}
The correspondence $T\mapsto \varphi(T)$ defines an embedding of the graph $\mt$ into the graph $Cay(\Gamma_n^{d+2})$.
\end{theorem}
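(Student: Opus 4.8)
The plan is to establish the statement in three steps: that $\varphi$ is well defined (independent of the chosen path $\gamma$), that it carries edges to edges, and that it is injective on vertices; the first and third are the substantive points.

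\textbf{Well-definedness.} I would use that the graph $\mt$ is the $1$-skeleton of the \emph{secondary polytope} $\Sigma(\bx)$ of the configuration (Gelfand--Kapranov--Zelevinsky~\cite{GKZ}). The general position hypothesis (no $d+1$ points on a hyperplane) forces every circuit of $\bx$ to have exactly $d+2$ points, so every edge of $\mt$ is a Pachner move of a type $(P,Q)$ with $|P\cup Q|=d+2$, matching the generators of $\Gamma_n^{d+2}$. The key is then to identify the $2$-faces of $\Sigma(\bx)$: by the fiber structure of secondary polytopes these are of exactly two kinds --- quadrilaterals coming from two independent flips, and $(d+3)$-gons coming from a subconfiguration of $d+3$ points of $\bx$ (a simplicial polytope with $d+3$ vertices, classified by the standard Gale diagrams in $\R^2$ used in Definition~\ref{def:Gamma_nk}). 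Reading the boundary word of a quadrilateral gives a far-commutativity relator, and reading the boundary word of a $(d+3)$-gon gives exactly a $(k+1)$-gon relator with $k=d+2$ (this is precisely what the Gale-diagram description of those relators was built to reproduce). Since $\Sigma(\bx)$ is a convex polytope it is simply connected, so every closed edge-loop in $\mt$ is a product of conjugates of $2$-face boundaries; as each such boundary maps to a relator of $\Gamma_n^{d+2}$ (up to cyclic rotation and inversion), every loop maps to $1$, and hence $\prod_i a_{P_i,Q_i}$ depends only on the endpoint $T$.

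\textbf{Edges to edges.} If $T\to T'$ is a Pachner move of type $(P,Q)$, appending it to a path realizing $\varphi(T)$ shows $\varphi(T')=\varphi(T)\,a_{P,Q}$, so $\varphi(T)$ and $\varphi(T')$ are adjacent in $Cay(\Gamma_n^{d+2})$.

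\textbf{Injectivity.} I would introduce a homomorphism $\psi\colon\Gamma_n^{d+2}\to A$, where $A$ is the free abelian group on the $(d+1)$-element subsets of $\{1,\dots,n\}$ (the candidate top-dimensional simplices), defined on generators by
$$\psi(a_{P,Q})=\sum_{q\in Q}\bigl[(P\cup Q)\setminus\{q\}\bigr]-\sum_{p\in P}\bigl[(P\cup Q)\setminus\{p\}\bigr].$$
This respects all the relations of Definition~\ref{def:Gamma_nk}: the relation $a_{Q,P}=a_{P,Q}^{-1}$ is immediate from the sign, far commutativity is automatic because $A$ is abelian, and each $(k+1)$-gon relator maps to $0$ since traversing the full flip cycle of a $(d+3)$-point polytope restores every simplex to its original multiplicity. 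Telescoping along a path from $T_0$ to $T$ then yields $\psi(\varphi(T))=\chi(T)-\chi(T_0)$, where $\chi(T)=\sum_{\sigma\in T}[\sigma]$ records the facet set of $T$. Because a triangulation is determined by its set of top-dimensional simplices, $\chi$ is injective on $\mt$, so $T\neq T'$ forces $\psi(\varphi(T))\neq\psi(\varphi(T'))$ and hence $\varphi(T)\neq\varphi(T')$. Together with the edge-to-edge step, this shows $\varphi$ is an injective graph homomorphism, i.e.\ an embedding of $\mt$ into $Cay(\Gamma_n^{d+2})$.

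I expect the main obstacle to be the well-definedness step --- specifically, pinning down that the $2$-faces of $\Sigma(\bx)$ are precisely squares and $(d+3)$-gons and that each boundary word coincides, up to cyclic rotation, inversion and conjugation, with a defining relator of $\Gamma_n^{d+2}$. The injectivity argument via $\psi$, by contrast, is clean once the homomorphism is verified to descend to $\Gamma_n^{d+2}$.
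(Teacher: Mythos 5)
Your proposal follows essentially the same route as the paper: well-definedness via the identification of $\mt$ with the edge graph of the GKZ secondary polytope, whose $2$-faces realize exactly the far-commutativity and $(d+3)$-gon relators, and injectivity via a homomorphism $\psi$ from $\Gamma_n^{d+2}$ to an abelian group of $d$-chains whose composition with $\varphi$ recovers the simplex set of $T$. The only (harmless) deviation is that you take signed integer coefficients where the paper works over $\Z_2$ with $\psi(a_{P,Q})=\partial(P\cup Q)$; both recover the triangulation from its chain.
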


\begin{proof}
We must show first that the map $\varphi$ is well-defined, i.e. that the element $\varphi(T)$ does not depend on the choice of a path connecting $T_0$ with $T$.

It follows from the theory of I.M. Gelfand, M.M. Kapranov and A.V. Zelevinsky~\cite[Chapter 7]{GKZ} that the graph $\mt$ is the edge graph of some polytope $\Sigma(W)$ (called the {\em secondary polytope} of $W$), see \cref{fig:secondary_polytope}. The faces of $\Sigma(W)$ correspond to triangulations which coincide everywhere except for divisions of two subpolytopes with $d+2$ vertices  or a subpolytope with $d+3$ vertices in $W$. Hence, the faces give exactly the relations in $\Gamma_n^{d+2}$ which appear in \cref{def:Gamma_nk}. Thus, two different paths $\gamma$ and $\gamma'$, connecting $T_0$ with some triangulation $T$, produce words $\varphi(\gamma)$ and $\varphi(\gamma')$ which differ by relations in $\Gamma_n^{d+2}$, so $\varphi(\gamma)=\varphi(\gamma')\in \Gamma_n^{d+2}$.

\begin{figure}
\centering\includegraphics[width=240pt]{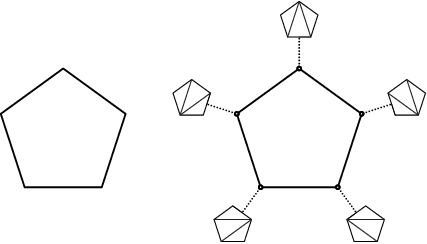}
\caption{A pentagon and its secondary polytope.}\label{fig:secondary_polytope}
\end{figure}

In order to proof that $\varphi$ is injective we construct a left inverse map to it. A {\em formal $l$-dimensional simplex} is any subset $P\subset \{1,\dots,n\}$, $|P|=l+1$. Let $C_l(n)$ be the linear space over $\mathbb{Z}_2$ whose basis consists of all the $l$-dimensional formal simplices. We can identify $C_l(n)\simeq \mathbb{Z}_2^{n\choose{l+1}}$ with the space of $l$-chains of the simplicial complex for a $(n-1)$-dimensional simplex whose vertices are marked with numbers $1,2,\dots,n$. The differential $\partial\colon C_l(n)\to C_{l-1}(n)$ of this complex is defined by the formula
$
\partial(P)=\sum_{p\in P}P\setminus\{p\}.
$

Any triangulation $T$ of $W$ defines a $d$-chain $c(T)=\sum_{P\colon \bx_P\in T}P$ such that $\partial c(T) = \partial W$ where $\partial W$ is the formal sum of $(d-1)$-dimensional faces forming the boundary of the polytope $W$. The triangulation $T$ can be restored from its chain $c(T)$ as the support of the chain. Thus, we can identify triangulations of $W$ with a subset of $C_d(n)$.

 If a triangulation $T'$ differs form a triangulation $T$ by applying a Pachner move of type $(P,Q)$ then $c(T')-c(T)=\partial(P\cup Q)$. Define a homomorphism $\psi\colon\Gamma_n^{d+2}\to C_{d}(n)$ by the formula
$$\psi(a_{P,Q})= \partial(P\cup Q) = \sum_{i\in P\cup Q}(P\cup Q)\setminus\{i\}.$$

The map $\psi$ annihilates the relations of $\Gamma_n^{d+2}$ because they come from cycles of triangulations of some polytope with $d+3$ vertices (or two polytopes with $d+2$ vertices). The image of a relation is equal to the difference of the chains for the final triangulation and the initial one. But these triangulations coincide so the image is zero. Thus, $\psi$ defines a correct homomorphism from $\Gamma_n^{d+2}$ to $C_{d}(n)\simeq\mathbb{Z}_2^{n\choose{d+1}}$.

For any regular triangulation $T$ consider a path $\gamma=e_1e_2\dots e_l$ which connects $T_0$ with $T$ in $\mt$. Any edge $e_i$ of $\gamma$ corresponds to a Pachner move of type $(P_i,Q_i)$. Then
$$
c(T)-c(T_0)=\sum_{i=1}^l\partial(P_i\cup Q_i) = \sum_{i=1}^l\psi(a_{P_i,Q_i})=\psi(\varphi(T)).
$$
Thus, the element $\varphi(T)$ determines the chain $c(T)$, hence, it uniquely determines the triangulation $T$.
\end{proof}

\begin{example}
Let $W\subset\mathbb{R}^2$ be a convex $n$-gon. Its secondary polytope $\Sigma(W)$ is the associahedron (Stasheff polytope) $K_{n-1}$~\cite{Stasheff}, see \cref{fig:associahedron}. Thus, we have the following statement:

\begin{figure}
\sidecaption{}
\centering\includegraphics[width=80pt]{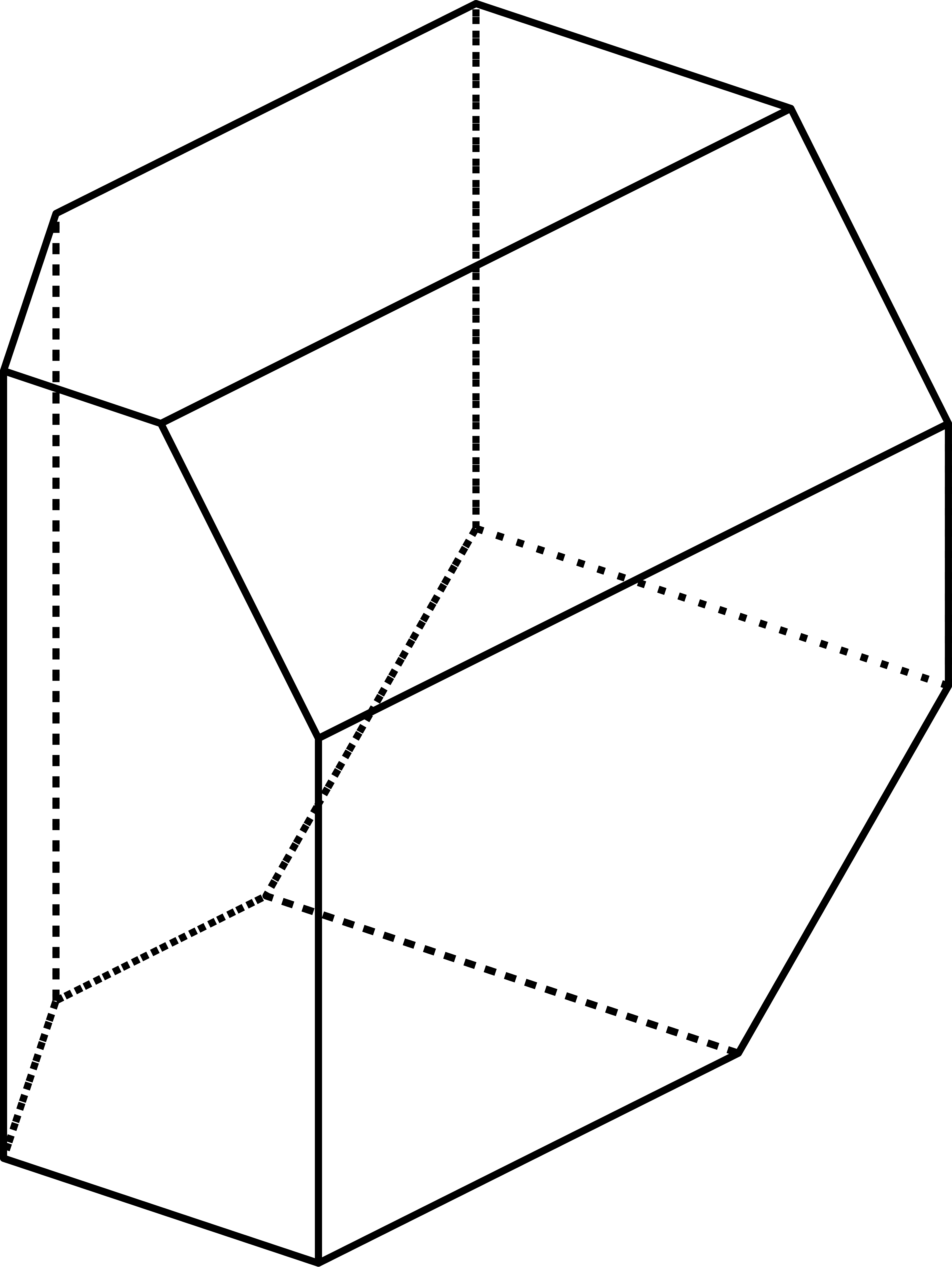}
\caption{Associahedron $K_5$.}\label{fig:associahedron}
\end{figure}

\begin{corollary}
The graph $Cay(\Gamma_n^{4})$ contains a subgraph isomorphic to the edge graph of the associahedron $K_{n-1}$.
\end{corollary}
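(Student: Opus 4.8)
The plan is to obtain the corollary as the direct specialization of Theorem~\ref{thm:Gamma_and_polytope_triangulations} to dimension $d=2$, fed by the identification of the secondary polytope recorded in the preceding Example. First I would take $W\subset\R^2$ to be the convex $n$-gon with vertices $\bx=\{x_1,\dots,x_n\}$ in general position, so that $d=2$, $d+2=4$, and the target group supplied by the theorem is exactly $\Gamma_n^4$. The theorem then furnishes an injective graph homomorphism $\varphi\colon\mt\to Cay(\Gamma_n^4)$ which sends vertices (triangulations) to vertices and edges (Pachner moves) to edges; this is the whole technical content, and it has already been established, so the corollary reduces to naming the source graph $\mt$ correctly.

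Next I would recall the two structural facts proved or cited earlier. From the proof of Theorem~\ref{thm:Gamma_and_polytope_triangulations} (via~\cite[Chapter 7]{GKZ}), the triangulation graph $\mt$ of $W$ is precisely the edge graph, i.e. the $1$-skeleton, of the secondary polytope $\Sigma(W)$. From the Example, for a convex $n$-gon the secondary polytope $\Sigma(W)$ is the Stasheff associahedron $K_{n-1}$~\cite{Stasheff}. Hence $\mt$ is exactly the edge graph of $K_{n-1}$, and combining this with the embedding $\varphi$ shows that $\varphi(\mt)$ is a subgraph of $Cay(\Gamma_n^4)$ isomorphic to the edge graph of $K_{n-1}$, which is the assertion of the corollary.

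The step requiring a moment of care, rather than a genuine obstacle, is verifying that the hypotheses of Theorem~\ref{thm:Gamma_and_polytope_triangulations} apply cleanly and that $\mt$ really captures all of $K_{n-1}$. Here I would invoke the classical fact that for a convex polygon in general position \emph{every} triangulation is regular; consequently $\mt$ is not merely a subgraph of the $1$-skeleton of $\Sigma(W)$ but its entire edge graph, so no vertices of $K_{n-1}$ are lost. The general-position requirement on $\bx$ (no $d+1=3$ points collinear) is automatic for the vertices of a convex $n$-gon, so the theorem applies verbatim.

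Since the only ingredients are an already-proven embedding theorem and a standard polytope identification, I do not anticipate a substantive difficulty; the work is entirely in correctly matching the indices ($d+2=4$, and $K_{n-1}$ enumerating the $C_{n-2}$ triangulations of the $n$-gon) and in the brief justification that all triangulations of the polygon are regular.
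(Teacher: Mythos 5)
Your proposal is correct and follows essentially the same route as the paper, which simply specializes Theorem~\ref{thm:Gamma_and_polytope_triangulations} to $d=2$ and invokes the identification $\Sigma(W)=K_{n-1}$ for a convex $n$-gon. Your additional check that every triangulation of a convex polygon is regular (so that $\mt$ is the full edge graph of $K_{n-1}$) is a point the paper leaves implicit, and it is a worthwhile clarification rather than a departure from the argument.
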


\end{example}

%
%
%
%

\section{Other developments: the groups $\tilde{\Gamma}_n^k$}

In the present section we provide constructions resembling the $\Gamma_n^k$ groups but different in some aspects, which prove useful in certain situations where the $\Gamma_n^k$ groups are not sufficient.
Geometrically speaking, here we consider oriented triangulations. Therefore, the indices of the generators of the groups are not independent and do not freely commute as was seen, for example, in the groups $\Gamma_n^5$ (see \cref{def:Gamma_n5}, first relation).

To be precise, we introduce the following:

\begin{definition}\label{def:tildeGamma_nk}
Let $5\le k\le n$. The group $\tilde{\Gamma}_n^k$ is the group with the generators
$$
\{ a_{P,Q}\,|\,P,Q \; \hbox{--- cyclically ordered subsets of} \; \{1,\dots,n\}, P\cap Q=\emptyset, |P\cup Q|=k, |P|,|Q|\ge 2\},
$$
and the following relations:
\begin{enumerate}
\item[1,2,3.] the relations 1, 2, 3 from \cref{def:Gamma_nk};
\item[4.] $a_{Q,P}=a_{Q',P'}$, where $Q=Q', P=P'$ as unordered sets, and as cyclically ordered sets $Q$ differs from $Q'$ by one transposition and $P$ differs from $P'$ by one transposition.
\end{enumerate}
\end{definition}

\begin{figure}
\sidecaption{}
\includegraphics[width = 0.5\textwidth]{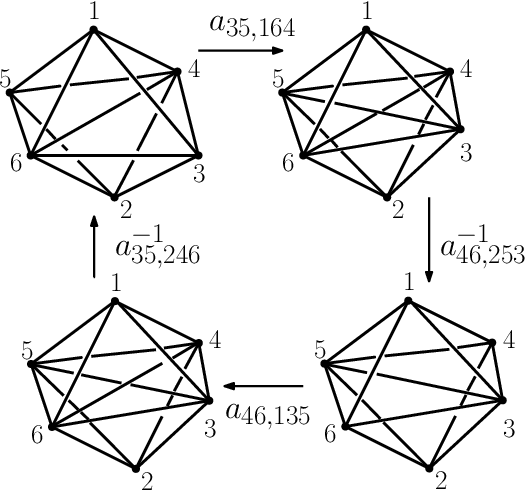}
\caption{A movement of a point around the configuration of four points on one circle}\label{exa_4word}
\end{figure}

These groups have the same connection to geometry and dynamics as the groups $\Gamma_n^k$ defined above. To illustrate that, consider the following dynamical system.
\begin{example}
	Let us have a dynamical system describing a movement of a point around the configuration of four points on one circle, see \cref{exa_4word}. Such system may be presented as the word in the group $\tilde{\Gamma}_6^5$ $$w=a_{35, 164} a^{-1}_{46, 253} a_{46, 135} a^{-1}_{35, 246}.$$

	We can show that $\phi(\alpha)$ is nontrivial in the abelianisation $(\tilde{\Gamma}_6^5)_{ab}=\tilde{\Gamma}_6^5/[\tilde{\Gamma}_6^5,\tilde{\Gamma}_6^5]$ of the group $\tilde{\Gamma}_6^5$. Computer calculations show that the group $(\tilde{\Gamma}_6^5)_{ab}$ can be presented as the factor of a free commutative group with $120$ generators modulo $2\cdot 6!=1440$ relations that span a space of rank $90$ if we work over $\mathbb{Z}_2$. Adding the element $w$ to the relations increases the rank to $91$, so the element $w$ is nontrivial in $(\tilde{\Gamma}_6^5)_{ab}$, therefore it is nontrivial in $\tilde{\Gamma}_6^5$.

	Hence, we have encountered a peculiar new effect in the behaviour of $\tilde{\Gamma}_{n}^{5}$ which is not the case of $G_{n}^{k}$. Certainly, the abelianisation of $G_{n}^{k}$ is non-trivial and very easy to calculate since any generator enters each relation evenly many times. However, this happens not only for relations but for any words which come from braids. Thus, any abelianisations are trivial in interesting cases. For $\tilde{\Gamma}_{n}^{k}$, it is an interesting new phenomenon, and the invariants we have demonstrated so far are just the tip of the iceberg to be investigated further.
	
	Note that both for the groups $\Gamma_n^k$ and $\tilde{\Gamma}_n^k$ we have many invariants since the corank of those groups is big.
\end{example}

\end{document}